\documentclass[a4paper,11pt,reqno,dvips]{amsart}
\usepackage{amsmath,a4,amsfonts,amssymb,amsthm,enumerate,fleqn,geometry}
\usepackage{graphicx}
\usepackage[dvips]{epsfig}
\mathindent4em

\mathsurround1pt

\def\CC{\mathbb{C}}

\def\RR{\mathbb{R}}
\def\NN{\mathbb{N}}

\pretolerance=5000
\hyphenpenalty=5000

\def\A{\mathcal{A}}
\theoremstyle{plain}

\newtheorem{theorem}{\bf Theorem}[section]

\theoremstyle{remark}
\newtheorem{definition}[theorem]{\bf Definition}

\newtheorem{example}[theorem]{\bf Example}

\newtheorem{corollary}[theorem]{\bf Corollary}

\headsep=2.25em
\title[Computable g-frames]{Computable g-frames}

\author[Poonam Mantry]{Poonam Mantry}
\address{Daulat Ram College}
\email{poonam.mantry@gmail.com}

\author[S.K. Kaushik]{S.K. Kaushik}
\address{Kirorimal College}
\email{shikk2003@yahoo.co.in}

\begin{document}
\subjclass[2000]{03F60, 46S30}
\keywords{Computable g-frames, Computable frames, Computable Hilbert space}

\begin{abstract}
The notion of g-frames for Hilbert spaces was introduced and studied by Wenchang Sun [16] as a generalization of the notion of frames. In this paper, we define computable g-frames in computable Hilbert spaces and obtain computable versions of some of their characterizations and related results.
\end{abstract}

\thispagestyle{empty}
\maketitle

\thispagestyle{empty}

\section{Introduction}
$g$-frames are natural generalizations of frames and share many useful properties with frames. The redundant nature of frames is a desirable property in many practical problems where error tolerance and noise suppression are required. However, a number of new applications have emerged, which cannot be modelled naturally by one simple frame, so some generalizations of frames have been given. In order to make a systematic research on generalized frames, the concept of $g$-frames was introduced by Wenchang Sun [16] in the year 2006.\\
In this paper, we extend the notion of computability of $g$-frames in Hilbert spaces and obtain computable versions of related results. The fact that results on relationship between $g$-frames and frames do not extend directly in the computable version, is supported by examples and then sufficient conditions are derived for the same. The notion of computable dual $g$- frames is also discussed and sufficient conditions for their existence are derived.
\section{ Preliminaries}
In this section, we briefly summarize some notions from computable analysis as presented in [18]. Computable Analysis is the Turing machine based approach to computability in analysis. Pioneering work in this field has been done by Turing[17], Grzegorczyk[9], Lacombe[13], Banach and Mazur[3], Pour-El and Richards[15], Kreitz and Weihrauch[12] and many others. The basic idea of the representation based approach to computable analysis is to represent infinite objects like real numbers, functions or sets by infinite strings over some alphabet $\Sigma$ (which at least contains the symbols 0 and 1). Thus, a representation of a set X is a surjective function $\delta:\subseteq \Sigma^{\omega}\rightarrow X$ where $\Sigma^{\omega}$ denotes the set of infinite sequences over $\Sigma$ and the inclusion symbol indicates that the mapping might be partial. Here, $(X,\delta)$ is called a represented space. \\
A function $F:\subseteq\Sigma^\omega \to \Sigma^\omega$ is said to be computable if there exists some Turing machine, which computes infinitely long and transforms each sequence $p$, written on the input tape, into the corresponding sequence $F(p)$, written on one way output tape.  Between two represented spaces, we define the notion of a computable function.
\begin{definition}$([7])$
Let $(X,\delta)$, $(Y,\delta^\prime)$ be represented spaces. A function $f:\subseteq X\to Y$ is called $(\delta,\delta^\prime)$-computable if there exists a computable function $F:\subseteq\Sigma^\omega \to \Sigma^\omega$ such that $\delta^\prime F(p)= f\delta(p)$, for all $p\in dom(f\delta)$.
\end{definition}
We simply call, a function $f$ computable, if the represented spaces are clear from the context. For comparing two representations $\delta$, $\delta^{'}$of a set $X$, we have the notion of reducibility of representations. $\delta$ is called reducible to $\delta^{'}$, $\delta \leq \delta^{'}$ (in symbols), if there exists a computable function $F:\subseteq \Sigma^{\omega}\rightarrow \Sigma^{\omega}$ such that $\delta(p)= \delta^{'}F(p)$, for all $p\in dom(\delta)$. This is equivalent to the fact that the identity $I:X\rightarrow X$ is $(\delta, \delta^{'})$- computable. If $\delta \leq \delta^{'}$ and $\delta^{'} \leq \delta$, then $\delta$ and $\delta^{'}$ are called computably equivalent.
Analogous to the notion of computability, we can define the notion of $(\delta,\delta^{'})$-continuity, by substituting a continuous function $F$ in the Definition 2.1. On $\Sigma^{\omega}$, we use the Cantor topology, which is simply the product topology of the discrete topology on $\Sigma$.
\\Given a represented space $(X,\delta)$, a computable sequence is defined as a computable function $f:\NN\to X $ where we assume that $\NN $ is represented by $\delta_{\NN}(1^{n}0^{\omega})= n$ and a point $x\in X$ is called computable if there is a constant computable sequence with value $x$. The notion of $(\delta,\delta^{'})$-continuity agrees with the ordinary topological notion of continuity, as long as, we are dealing with admissible representations.
\\ A representation $\delta$ of a topological space $X$ is called admissible if $\delta$ is continuous  and if the identity $I:X\to X$ is $(\delta^\prime,\delta)$-continuous for any continuous representation $\delta^\prime$ of $X$. If $\delta$, $\delta^\prime$, are admissible representation of topological spaces $X$,$Y$, then a function $f:\subseteq X\to Y$ is $(\delta,\delta^\prime)$ continuous if and only if it is sequentially continuous [4].
\\Given two represented spaces $(X,\delta)$, $(Y,\delta^\prime)$, there is a canonical representation $[\delta\rightarrow\delta^\prime]$ of the set of $(\delta,\delta^\prime)$-continuous functions $f:X\to Y $. If $\delta$ and $\delta^\prime$ are admissible representations of sequential topological spaces $X$ and $Y$ respectively, then $[\delta\rightarrow\delta^\prime]$ is actually a representation of the set $C(X,Y)$ of continuous functions $f:X\rightarrow Y$. The function space representation can be characterized by the fact that it admits evaluation and type conversion. See [18] for details.
\\ If $(X,\delta)$, $(Y,\delta^\prime)$ are admissibly represented sequential topological spaces, then, in the following, we will always assume that $C(X,Y)$  is represented by $[\delta\rightarrow\delta^\prime]$ . It follows by evaluation and type conversion that the computable points in $(C(X,Y),[\delta\rightarrow\delta^\prime])$ are just the $(\delta,\delta^\prime)$-computable functions $f:\subseteq X\to Y$ $[18]$. For a represented space $(X,\delta)$, we assume that the set of sequences $X^{\NN}$ is represented by $\delta^{\NN} = [\delta_{\NN}\rightarrow \delta]$. The computable points in $(X^{\NN},\delta^{\NN})$ are just the computable sequences in $(X,\delta)$.
\\The notion of computable metric space was introduced by Lacombe $[14]$. However, we state the following definition given by Brattka $[6]$.
\begin{definition}([6])
 A tuple $(X,d,\alpha)$ is called a \emph{computable metric space} if
\begin{enumerate}
   \item $(X,d)$ is a metric space.
   \item $\alpha:\NN\to X$ is a sequence which is a dense in $X$.
   \item $do(\alpha\times\alpha):\NN^2\to \RR$ is a computable (double) sequence in $\RR$.

\end{enumerate}
\end{definition}
Given a computable metric space $(X,d,\alpha)$, its Cauchy Representation $\delta_{X}:\subseteq \Sigma^{\omega}\rightarrow X$ is defined as\\
 \begin{align*} \delta_{X}(01^{n_{0}+1}01^{n_{1}+1}01^{n_{2}+1}...):=  \lim_{i\rightarrow\infty}\alpha(n_{i}), \end{align*} \\for all $n_{i} \in \NN$ such that $(\alpha(n_{i}))_{i\in \NN}$ converges  and \\
\begin{align*} d(\alpha(n_{i}),\alpha(n_{j})) < 2^{-i}, \ \text{for all} \ j>i.\end{align*} \\
\\In the following, we assume that computable metric spaces are represented by their Cauchy representation. All Cauchy representations are admissible with respect to the corresponding metric topology.
\\An example of a computable metric space is $(\RR,d_{\RR},\alpha_{\RR})$ with the Euclidean metric given by $d_{\RR}(x,y)= \|x-y\|$ and a standard numbering of a dense subset $Q\subseteq \RR$ as $\alpha_{\RR}\langle i,j,k \rangle = (i-j)/(k+1)$. Here, the bijective Cantor pairing function $\langle,\rangle: \NN^{2}\rightarrow \NN$ is defined as $\langle i,j \rangle = j+(i+j)(i+j+1)/2$ and this definition can be extended inductively to finite tuples. It is known that the Cantor pairing function and the projections of its inverse are computable. In the following, we assume that $\RR$ is endowed with the Cauchy representation $\delta_{\RR}$ induced by the computable metric space given above.
\\Brattka gave the following definition of a computable normed linear space.
\begin{definition}([6])
A space $(X,\|\cdot\|,e)$ is called a \emph{computable normed space} if
 \begin{enumerate}
 \item $\|\cdot\|: X\rightarrow \RR$ is a norm on $X$.
 \item The linear span of $e:\NN\rightarrow X$ is dense in $X$.
 \item $(X,d,\alpha_{e})$ with $d(x,y)= \parallel x-y\parallel$ and $\alpha_{e}\langle k,\langle n_{0},...,n_{k}\rangle \rangle= \Sigma_{i=0}^{k}\alpha_{F}(n_{i})e_{i}$ is a computable metric space with Cauchy representation $\delta_{X}$.

\end{enumerate}
\end{definition}
 It was observed that computable normed space is automatically a computable vector space, i.e., the linear operations are all computable. If the underlying space $(X,\|\cdot\|)$ is a Banach space then $(X,\|\cdot\|,e)$ is called a computable Banach space.
 \\We always assume that computable normed spaces are represented by their Cauchy representations, which are admissible with respect to norm topology. Two computable Banach space with the same underlying set are called computably equivalent if the corresponding Cauchy representations are computably equivalent.\\
In this paper, we will discuss operators on computable Hilbert spaces. Brattka gave the following definition of computable Hilbert spaces.
\begin{definition}([5])
A \emph{computable Hilbert space} $(H,\langle \cdot \rangle,e)$ is a separable Hilbert space $(H,\langle.\rangle)$ together with a fundamental sequence $e:\NN \rightarrow H$ such that the induced normed space is a computable normed space.
\end{definition}

A sequence $(f_{i})_{i\in \NN}$ of elements in a Hilbert space $H$ is called a \emph{frame} for $H$ if there exists constants $A, B > 0$ such that
\begin{align*} A\|f\|^{2} \leq \Sigma_{i\in \NN}|\langle f,f_{i}\rangle|^{2} \leq B\|f\|^{2}, \ \text{for all} \ f\in H. \end{align*}
The numbers $A$ and $B$ are called a lower and upper frame bound for the frame $(f_{i})_{i\in \NN}$.
 The \emph{synthesis operator} $T$ given by $T((c_{i})) = \sum_{i=1}^{\infty} c_{i}f_{i}, (c_{i})_{i\in\mathbb{N}}\in l^{2}$ associated with a frame $(f_{i})$ is a bounded operator from $l^{2}$ onto $H$. The adjoint operator $T^{*} : H \rightarrow l^{2}$ given by $T^{*}(f)= (\langle f,f_{i}\rangle)$ is called the \emph{analysis operator}. For other concepts and results related to frames, refer to [8]. For a Hilbert space $H$ and a sequence of Hilbert spaces $\{H_{i}: i\in \NN\}$, a sequence $\{\Lambda_{i}\in L(H, H_{i}):i\in \NN\}$ is called a generalized frame, or simply a \emph{$g$-frame} for $H$ with respect to $\{H_{i}:i\in \NN\}$ if there exists two positive constants $A$ and $B$ such that
\begin{align*} A\|f\|^{2} \leq \Sigma_{i\in \NN}\| \Lambda_{i}f\|^{2} \leq B\|f\|^{2}, \ \text{for all} \ f\in H. \end{align*}
$A$ and $B$ are called a lower and upper $g$-frame bounds, respectively. Here, $L(H,H_{i})$ is the collection of bounded linear operators from $H$ to $H_{i}$, for $i\in \NN$.

\section{Main results}
The notion of g-frames is a generalization of the notion of frames. We begin this section with the definition of computable version of a g-frame.
\begin{definition}
Let $H$ and $\{H_i, i\in\mathbb{N}\}$ be computable Hilbert spaces. A computable sequence $\{\Lambda_i\in L(H,H_i):i\in\mathbb{N}\}$ is called a computable $g$-frame for $H$ with respect to $\{H_i:i\in \mathbb{N}\}$  if there exist  positive constants $A, B $ with $0 < A \le B < \infty$ such that
\begin{align*}
A\|f\|^{2} \leq \Sigma_{i\in \NN}\|\Lambda_i f\|^2 \leq B\|f\|^{2}, \ \text{for all} \ f\in H.
\end{align*}
\end{definition}
\indent By a computable sequence, we mean that for given $i\in\NN$, $\Lambda_i$ is $[\delta_H\rightarrow\delta_{H_i}]$ computable. This can be seen by defining $L^*(H)=\cup_{i\in\NN} L(H,H_i).$ We equip $L^*(H)$ by the representation $\delta^*$ that represents an element $f\in L^*(H)$ by the number $'i'$ such that $f\in L(H,H_i)$ and a name of $f$ as a continuous function from $H$ to $H_i$, i.e.,\\
\\$\delta^*(\langle p,q\rangle)=f \Leftrightarrow  \delta_{\mathbb{N}}(p)=i, f\in L(H,H_i)$ and $[\delta_H\rightarrow\delta_{H_i}](q)=f$.\\
\\A computable sequence $\{\Lambda_i\in L(H,H_i):i\in\NN \}$ can be considered as a computable function
$\phi:\NN  \rightarrow L^*(H)$  given by
$\phi(i) \rightarrow\Lambda_i\in L(H,H_i), i\in \NN.$ That is, given $\delta_{\NN}$ name of $i\in\NN$, we get $[\delta_H\rightarrow\delta_{H_i}]$ name of $\Lambda_i$.\\

A computable frame for a computable Hilbert space $H$ is equivalent to a computable $g$-frame $(\Lambda_{i})$ for $H$ with respect to $\CC$, if $\|\Lambda_{i}\|$ can also be computed in addition to $[\delta_{H}\rightarrow \delta_{C}]$ name of $\Lambda_{i}$, for given $i\in \NN$.
\begin{theorem}
Let $H$ be a computable Hilbert space. Then, the map
\begin{align*}
\phi:\subseteq H^{\NN} &\rightarrow {H^{\prime}}^{\NN}\\
(f_i)&\rightarrow(\Lambda_{f_i})
\end{align*}
with $dom \phi=\{(f_i):(f_i) \ \text{is a frame for} \ H \},$ which maps each frame $(f_i)$ to a $g$-frame for $H$ with respect to $\CC$ is $(\delta_{H}^{\NN}, \delta_{H^{\prime}}^{\NN})$ computable. Also, the map $\phi^{-1}$ is $(\delta_{H^{\prime}}^{\NN}, \delta_{H}^{\NN})$ computable.
\end{theorem}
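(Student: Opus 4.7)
The plan is to use the Riesz correspondence. Define $\Lambda_{f_i}\in H'$ by $\Lambda_{f_i}(f)=\langle f,f_i\rangle$. Since $\|\Lambda_{f_i}(f)\|^{2}=|\langle f,f_i\rangle|^{2}$, the frame inequality $A\|f\|^{2}\le\sum_{i}|\langle f,f_i\rangle|^{2}\le B\|f\|^{2}$ is literally the $g$-frame inequality for $(\Lambda_{f_i})$ with respect to $\CC$, with the same constants $A$ and $B$. So on the set-theoretic level the map is clear; what remains is computability in both directions.

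For the forward direction, I would use the fact that the inner product of a computable Hilbert space is a computable map $H\times H\to\CC$. Given a $\delta_{H}^{\NN}$-name of $(f_i)$, for each $i$ we get a $\delta_{H}$-name of $f_i$, and type conversion produces a $[\delta_{H}\to\delta_{\CC}]$-name of $\Lambda_{f_i}=\langle\cdot,f_i\rangle$. Pairing with the $\delta_{\NN}$-name of $i$ (since $H_i=\CC$ for every $i$, no further data is required) yields a $\delta^{*}$-name of $\Lambda_{f_i}$; assembling over $i$ delivers the required $\delta_{H'}^{\NN}$-name, and hence $\phi$ is computable.

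For the inverse, given a $\delta_{H'}^{\NN}$-name of a $g$-frame $(\Lambda_{i})$ with respect to $\CC$, I need to recover each $f_i$ computably from $\Lambda_i$. The plan is to invoke the effective Riesz representation theorem for computable Hilbert spaces: from a $[\delta_{H}\to\delta_{\CC}]$-name of a bounded linear functional $\Lambda$ one can compute a $\delta_{H}$-name of the unique $f\in H$ with $\Lambda=\langle\cdot,f\rangle$. Concretely, one evaluates $\Lambda$ on the fundamental sequence $(e_n)$, combines these values with the computable Gram matrix $\langle e_m,e_n\rangle$, and uses effective Gram--Schmidt to construct partial sums that approximate $f$ at the rate $2^{-k}$ demanded by the Cauchy representation. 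Carrying out this procedure uniformly in $i$ yields the computable sequence $(f_i)$, and the $g$-frame bounds for $(\Lambda_i)$ become frame bounds for $(f_i)$ via the identity in the first paragraph.

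I expect the inverse direction to be the technical heart of the argument: while the Riesz representative exists and is unique classically, its uniform computability from a bare function-name $[\delta_{H}\to\delta_{\CC}]$ is delicate, since the norm of $\Lambda$ is \emph{a priori} only lower-semicomputable from such a name. The key enabling fact is that a computable Hilbert space is equipped with a fundamental sequence together with a computable inner product, which supplies precisely the data needed to reconstruct $f$ uniformly; once this effective reconstruction is in place, the rest is routine bookkeeping with the standard representations.
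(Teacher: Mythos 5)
Your forward direction is sound and is essentially the paper's argument carried out by hand: the paper simply composes the computable map $i \mapsto f_i$ with the Riesz map $R:H\rightarrow H^{\prime}$, whose computability it takes from the computable Frechet--Riesz theorem of [5], whereas you rebuild the $[\delta_H\rightarrow\delta_{\CC}]$-name of $\langle\cdot,f_i\rangle$ via evaluation and type conversion. One omission even there: a $\delta_{H^{\prime}}$-name in the sense of [5] also carries the norm of the functional, so you should record that $\|\Lambda_{f_i}\|=\|f_i\|$ is computable from the $\delta_H$-name of $f_i$; the paper's own remark preceding this theorem makes clear that the norm is part of the data.

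The inverse direction, as you have written it, contains a genuine error. You propose to compute the Riesz representative $f$ of $\Lambda$ from a bare $[\delta_H\rightarrow\delta_{\CC}]$-name and claim that the fundamental sequence together with the computable inner product supplies enough data to do so. It does not. From the function-name you can compute every Fourier coefficient $\langle f,u_n\rangle=\overline{\Lambda(u_n)}$ of $f$ with respect to a computable orthonormal basis, but to emit a Cauchy name of $f$ you must bound the tail $\sum_{n>N}|\langle f,u_n\rangle|^{2}$ from above, i.e.\ compute $\|f\|=\|\Lambda\|$ from above, and from a bare function-name the operator norm is only lower semicomputable --- exactly the obstruction you yourself identify before dismissing it. This is not a removable technicality: the map $\Lambda\mapsto f$ is not even continuous from the compact-open topology that $[\delta_H\rightarrow\delta_{\CC}]$ induces on $H^{\prime}$ to the norm topology on $H$ (the functionals $\langle\cdot,Me_N\rangle$ with $N$ large are close to $0$ in the former sense while their representatives stay far from $0$), so no amount of cleverness with the Gram matrix will produce the required algorithm. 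The resolution, and the reason the theorem is true as stated, is that $\delta_{H^{\prime}}$ is the dual-space representation of [5], which encodes a functional by a function-name \emph{together with its norm}; with $\|\Lambda\|$ in hand your tail estimate goes through, and this is precisely the content of the computable Frechet--Riesz theorem (computability of $R^{-1}$) that the paper invokes as a black box.
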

\begin{proof}
Given a computable frame $(f_i)$ in $H$, there exists a computable function $F :\NN \rightarrow H$ given by $i \rightarrow f_i$, $i\in \NN$. By computable theorem of Frechet Riesz [5], the map $R:H \rightarrow H^{\prime}$ given by $f \rightarrow f_y$, $f\in H$ is $[\delta_H\rightarrow \delta_{H^{\prime}}]$ computable, where $f_y:H\rightarrow F$ is given by $f_y(x)=\langle x,y\rangle, x\in H$. By composition, the map $R\circ F:\NN \rightarrow H^{\prime}$ given by
$i \rightarrow \Lambda_{f_i}$ is computable, where $\Lambda_{f_i}:H\rightarrow \CC$ is given by $\Lambda_{f_i}(f)=\langle f,f_i\rangle$ , $f\in H$. The sequence $(\Lambda_{f_i})$ is a computable $g$-frame for $H$ with respect to $\CC$, with respect to $\delta_{H^{\prime}}$ representation.\\
Now, given a computable $g$-frame $(\Lambda_i)$ for $H$ with respect to $\CC$, with respect to $\delta_{H^{\prime}}$ representation, the composition of the computable maps
$G:\NN \rightarrow H^{\prime}$ given by $i \rightarrow \Lambda_i$, $i\in \NN$ and $R^{-1}:H^{\prime} \rightarrow H$ given by $f_y \rightarrow f$ gives the computability of the map $R^{-1}\circ G:\NN \rightarrow H$ given by $i \rightarrow f_i$ where $(f_i)$ is the sequence in $H$ which satisfies $\Lambda_{f_i}:H\rightarrow \CC$ given by $\Lambda_{f_i}(f)=\langle f,f_i\rangle$, $i\in \NN$. The sequence $(f_i)$ is a computable frame for $H$.
\end{proof}

For a sequence $(H_i)_{i\in\NN}$ of Hilbert spaces, consider
\begin{align*}
\bigg(\sum_{i\in\NN}\oplus H_i\bigg)_{l_2}=\bigg\{(f_i):f_i\in H_i \ \text{and} \ \sum_{i\in\NN}||f_i||^2<\infty\bigg\}
\end{align*}
with inner product defined by $\langle (f_i),(g_i)\rangle$= $\sum_{i\in\NN}\langle f_i,g_i\rangle$. It is known that $\bigg(\sum_{i\in\NN}\oplus H_i\bigg)_{l_2}$ is a Hilbert space with pointwise operations.\\Define $E_{ij}\in\bigg(\sum_{i\in\NN}\oplus H_i\bigg)_{l_2}$ as
\begin{equation*}
(E_{ij})_k =
\begin{cases}
e_{ij},  k=i, \\
0,       k\neq i
\end{cases}
\end{equation*}
where $(e_{ij})_{j\in\NN}$ is an orthonormal basis of $H_{i}$, $i\in \NN$. Then, $(E_{ij})_{i,j\in\NN\times\NN}$ is an orthonormal basis of $\bigg(\sum\oplus H_i\bigg)_{l_2}.$
\\We now assume $H_i$ to be computable Hilbert space, for each $i\in\NN$ and let $(e_{ij})_{j\in\NN}$  be computable orthonormal basis of $H_i, i\in\NN$.\\
It is easy to see that $\bigg\{\bigg(\sum_{i\in\NN}\oplus H_i\bigg)_{l_2}, \langle \cdot\rangle, (E_{ij})_{i,j\in\NN\times\NN}\bigg\}$ is a computable Hilbert space. Accordingly, the Fourier representation of $\bigg(\sum_{i\in\NN}\oplus H_i\bigg)_{l_2}$ is given by
\begin{align*}
\delta_{Fourier}(\langle p,q\rangle)=(f_i)\Leftrightarrow (\delta_{\mathbb{F}}^{\NN})^{\NN}(p)=((\langle f_i,e_{ij}\rangle)_j)_i \ \text{and} \  \delta_{\RR}(q)=\sum_{i=1}^{\infty}\|f_i\|^{2}.
\end{align*}
We now define another representation of $\bigg(\sum_{i\in\NN}\oplus H_i\bigg)_{l_2}$ as follows:
\begin{align*}
\delta_{(\sum_{i\in\NN}\oplus H_i)_{l_2}}(\langle p,q\rangle)=(f_i)\Leftrightarrow p=\langle p_0,p_1,p_2,...\rangle\in \Sigma^{\omega}
\end{align*}
such that
\begin{align*}
[\delta_{H_0},\delta_{H_1},\delta_{H_2},...]\langle p_0,p_1,p_2,...\rangle=(\delta_{H_0}(p_0),\delta_{H_1}(p_1),...)=(f_0,f_1,f_2,...)
\end{align*}
and
\begin{align*}
\delta_{\RR}(q)=\sum_{i=1}^{\infty}\|f_i\|^{2}
\end{align*}
where $\delta_{H_{i}}$ is the Cauchy representation of $H_{i}$, for $i\in \NN.$
We now analyze the relationship between the above defined representations, $\delta_{Fourier}$ and $\delta_{(\sum_{i\in\NN}\oplus H_i)_{l_2}}$ of $\bigg(\sum_{i\in\NN}\oplus H_i\bigg)_{l_2}$. In the following result, we prove that  $\delta_{(\sum_{i\in\NN}\oplus H_i)_{l_2}}$ $\leq$ $\delta_{Fourier}$ representation.

\begin{theorem}
Let $\Bigg(\bigg(\sum_{i\in\NN}\oplus H_i\bigg)_{l_2}, \langle\cdot \rangle, ((E_{ij})_j)_i\Bigg)$ be a computable Hilbert space, where $(H_i)$ is a sequence of computable Hilbert spaces. The representation $\delta_{(\sum_{i\in\NN}\oplus H_i)_{l_2}}$ is reducible to $\delta_{Fourier}$ representation of $(\sum_{i\in\NN}\oplus H_i)_{l_2}$.
\begin{proof}
Given $\delta_{(\sum_{i\in\NN}\oplus H_i)_{l_2}}$ name of $(f_i)$, we can get $\delta_{H_i}$ name of $f_i$, for given $i\in\NN$. If $(e_{ij})_{j\in\NN}$ be a computable orthonormal basis of $H_i, i\in \NN,$ then for given $i\in\NN,$ we get $\delta_{\mathbb{F}}^{\NN}$ name of $(\langle f_i, e_{ij}\rangle)_j$. Thus, we obtain $(\delta_{\mathbb{F}}^{\NN})^{\NN}$ name of $((\langle f_i, e_{ij}\rangle)_j)_i$ and hence $\delta_{Fourier}$ name of $(f_{i})$.
\end{proof}
\end{theorem}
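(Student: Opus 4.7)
The plan is to construct a computable function that transforms a $\delta_{(\sum_{i\in\NN}\oplus H_i)_{l_2}}$ name $\langle p,q\rangle$ of $(f_i)$ into a $\delta_{Fourier}$ name $\langle p',q'\rangle$ of the same element. Since the second component of each representation is literally the same object, namely a $\delta_{\RR}$ name of $\sum_{i=1}^{\infty}\|f_i\|^2$, the machine can simply copy $q$ to the output as $q'$. All the real work lies in producing the Fourier part $p'$ from $p$.

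First I would unpack $p$. By definition $p=\langle p_0,p_1,p_2,\ldots\rangle$, and the projection maps that recover each $p_i$ from $p$ are computable. From each $p_i$ the Cauchy representation gives $\delta_{H_i}(p_i)=f_i$, so for every $i\in\NN$ we have uniform access to a $\delta_{H_i}$ name of $f_i$. Next I would invoke the hypothesis that $(e_{ij})_{j\in\NN}$ is a computable orthonormal basis of the computable Hilbert space $H_i$, uniformly in $i$. Since the inner product on a computable Hilbert space is a computable operation (which follows from the computability of the linear structure and the norm, applied through the polarization identity), the map $(i,j)\mapsto \langle f_i,e_{ij}\rangle$ into $\mathbb{F}$ is computable given $p$. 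Fixing $i$ and letting $j$ vary gives a $\delta_{\mathbb{F}}^{\NN}$ name of $(\langle f_i,e_{ij}\rangle)_j$, and letting $i$ vary as well yields a $(\delta_{\mathbb{F}}^{\NN})^{\NN}$ name of $((\langle f_i,e_{ij}\rangle)_j)_i$, which is exactly what the first component of $\delta_{Fourier}$ demands.

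I would then formally assemble the output: the Turing machine reads $\langle p,q\rangle$, computes the double sequence of Fourier coefficients as above to produce a word $p'$ with $(\delta_{\mathbb{F}}^{\NN})^{\NN}(p')=((\langle f_i,e_{ij}\rangle)_j)_i$, copies $q$ to $q'$, and writes $\langle p',q'\rangle$. By the definition of $\delta_{Fourier}$ this is a valid name for $(f_i)$, establishing the reduction $\delta_{(\sum_{i\in\NN}\oplus H_i)_{l_2}}\leq \delta_{Fourier}$.

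I do not expect any serious obstacle. The only mildly delicate point is making sure that the computation of $\langle f_i,e_{ij}\rangle$ is uniform in both $i$ and $j$, and that the pairing/unpairing needed to build $(\delta_{\mathbb{F}}^{\NN})^{\NN}$ from the individual sequences is carried out using the standard computable Cantor pairing already in use in the paper. Both of these are routine facts about the function space representation $[\delta_{\NN}\rightarrow\delta]$ and the canonical representations $\delta^{\NN}$, so the argument remains short and essentially mechanical.
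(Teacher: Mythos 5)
Your proposal is correct and follows essentially the same route as the paper's proof: unpack the componentwise Cauchy names, use the computable orthonormal bases and the computability of the inner product to obtain the double sequence of Fourier coefficients uniformly in $i$ and $j$, and carry over the $\delta_{\RR}$ name of $\sum_i\|f_i\|^2$ unchanged. Your version merely spells out the handling of the second component $q$ and the uniformity issues that the paper leaves implicit.
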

Next, we prove that $\delta_{Fourier}$ $\leq$ $\delta_{(\sum_{i\in\NN}\oplus H_i)_{l_2}}$ under additional conditions.
\begin{theorem}
Let $\Bigg(\bigg(\sum_{i\in\NN}\oplus H_i\bigg)_{l_2}, \langle\cdot \rangle, ((E_{ij})_j)_i\Bigg)$ be a computable Hilbert space such that each $(f_i)\in(\sum_{i\in\NN}\oplus H_i)_{l_2}$ has a computable sequence of norms $(\|f_i\|)$. Then, $\delta_{Fourier}$ representation is reducible to $\delta_{(\sum_{i\in\NN}\oplus H_i)_{l_2}}$ representation.
\end{theorem}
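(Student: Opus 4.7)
The plan is to turn a $\delta_{Fourier}$ name $\langle p,q\rangle$ of $(f_i)$ into a $\delta_{(\sum_{i\in\NN}\oplus H_i)_{l_2}}$ name $\langle p',q\rangle$ of $(f_i)$. The second component is the same in both representations (namely a $\delta_{\RR}$ name of $\sum_{i\in\NN}\|f_i\|^2$), so it can simply be copied. The whole task therefore reduces to producing, uniformly in $i\in\NN$, a $\delta_{H_i}$ Cauchy name of $f_i$ from the array of Fourier coefficients $c_{ij}=\langle f_i,e_{ij}\rangle$ that is coded in $p$, using the extra hypothesis that the sequence of norms $(\|f_i\|)_{i\in\NN}$ is computable.

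Fix $i,k\in\NN$. Since $(e_{ij})_{j\in\NN}$ is a computable orthonormal basis of $H_i$, we have $f_i=\sum_{j} c_{ij}e_{ij}$, and Parseval's identity gives
\[
\Big\|f_i-\sum_{j=0}^{N} c_{ij}e_{ij}\Big\|^2 \;=\; \|f_i\|^2-\sum_{j=0}^{N} |c_{ij}|^2.
\]
From $p$ we can read $\delta_{\mathbb{F}}$ names of the $c_{ij}$, so the partial sums $\sigma_{i,N}:=\sum_{j=0}^{N}|c_{ij}|^{2}$ form a computable double sequence in $\RR$. By hypothesis $\|f_i\|$, and hence $\|f_i\|^2$, is computable uniformly in $i$. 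We can therefore effectively search for an $N=N(i,k)$ with $\|f_i\|^2-\sigma_{i,N}<2^{-2k-2}$; such an $N$ must exist by completeness of the orthonormal expansion, and the search halts because the difference can be enumerated from above from the available data. For such $N$, the rational linear combination $s_{i,k}:=\sum_{j=0}^{N} c_{ij}e_{ij}$ is a computable point of $H_i$ (by computability of the vector operations together with the computable basis $(e_{ij})_j$) and satisfies $\|f_i-s_{i,k}\|<2^{-k-1}$.

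Passing from the sequence $(s_{i,k})_{k\in\NN}$ to a standard Cauchy subsequence (each term within $2^{-k}$ of its successor) yields a $\delta_{H_i}$ name $p_i$ of $f_i$, and because every step above is uniform in $i$ we actually construct the $[\delta_{H_0},\delta_{H_1},\ldots]$ name $p'=\langle p_0,p_1,\ldots\rangle$ from $p$ alone. Combined with the unchanged component $q$, this produces the required $\delta_{(\sum_{i\in\NN}\oplus H_i)_{l_2}}$ name, proving the reduction. The single real obstacle is the effective truncation in the second paragraph: without a computable handle on $\|f_i\|$ we could enumerate the partial sums $\sigma_{i,N}$ but would have no halting criterion for the search on $N$, and the computable norm hypothesis is used precisely to supply this missing upper bound on the tail of the Fourier expansion.
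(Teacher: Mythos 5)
Your proof is correct and follows the same route as the paper's: recover the Fourier coefficients from the $\delta_{Fourier}$ name, use the computable norms $(\|f_i\|)$ to effectively bound the tail $\|f_i\|^2-\sum_{j\le N}|c_{ij}|^2$ and thereby produce a $\delta_{H_i}$ Cauchy name of each $f_i$, and copy the component coding $\sum_i\|f_i\|^2$. The paper's proof is just a one-sentence sketch of this; your Parseval-based truncation argument is the detail it leaves implicit.
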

\begin{proof}
This can be easily seen as $\delta_{Fourier}$ name of $(f_i)$ gives $\delta_{\mathbb{F}}^{\NN}$ name of $(\langle f_i,e_{ij}\rangle)_j$ for given $i\in\NN$. As $(\|f_i\|)$ is a computable sequence, we get $\delta_{H_i}$ name of $f_i$ for given $i\in\NN$. This gives $\delta_{(\sum_{i\in\NN}\oplus H_i)_{l_2}}$ name of $(f_i)$.
 \end{proof}
 If $(e_{ij})_{j\in\NN}$ be an orthonormal basis for $H_i$, for $i\in\NN$, then $\{\Lambda_i:i\in\NN\}$ is a $g$-frame for $H$ with respect to $\{H_{i}: i\in \NN\}$ if and only if $\{\Lambda_i^{*}e_{ij}:i,j\in\NN\}$ is a frame for $H$ by a result in [16]. We analyze the computable version of above result in the following.\\
The example given below shows that if $(\Lambda_i)$ be a computable $g$-frame for $H$, then $\{\Lambda_i^{*}e_{ij}:i,j\in\NN\}$ need not be a computable frame for $H$.
\begin{example}
Consider the computable bijective linear operator on $l^2$ given by $U:l^2\rightarrow l^2$ as\\
 \[\left(\begin{array}{ccccc}
 1& a_{1}& a_{2} & a_{3}& \cdots\\
 0& 1 & a_{1} & a_{2} & \cdots\\
 0 & 0 & 1 & a_{1} & \cdots\\
 \vdots &\vdots &\vdots & \ddots\end{array}\right)\]

where $(a_i)$ is a computable sequence of positive real numbers such that $\|(a_i)\|_{l^2}<2$ exists but is not computable. Since every bounded invertible linear operator on $H$ forms a $g$-frame for $H$, the operator $U$ forms a computable $g$-frame for $l^2$, but $\{U^*\delta_i:i=1,...,\infty\}, (\delta_i)$ being the standard computable orthonormal basis of $l^2$, is not a computable frame for $l^2$ as $U^*\delta_0=(1,a_1,a_2,...)$ is not computable in $l^2$.
\end{example}

We now give a sufficient condition for the computability of the frame $\{\Lambda_i^* e_{ij}:i,j\in\NN\}$ for $H$, given a computable g-frame $\{\Lambda_i:i\in\NN\}$ for $H$ with respect to $\{H_{i}:i\in \NN\}$.
\begin{theorem}
Let $H$ be a computable Hilbert space, $(H_i)$ be a sequence of computable Hilbert spaces with $(e_{ij})_j$ being the computable orthonormal basis of $H_{i}$, $i\in \NN$. If $\{\Lambda_i:i\in\NN\}$ be a computable $g$-frame for $H$ with respect to $\{H_i:i\in \NN\}$ such that $\{\|\Lambda_i^{*}e_{ij}\|:i,j\in\NN\}$ is a computable double sequence, then $\{\Lambda_i^{*}e_{ij}:i,j\in\NN\}$ is a computable frame for $H$.
\end{theorem}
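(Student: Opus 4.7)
The plan is to show that the double sequence $\{\Lambda_i^{*}e_{ij}\}_{i,j\in\NN}$ is computable as a map $\NN\times\NN\to H$; the frame property itself then follows from the (non-computable) theorem of Sun cited just before the statement.

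First I would fix a computable orthonormal basis $(g_k)_{k\in\NN}$ of $H$, obtained from the given fundamental sequence by a computable Gram--Schmidt procedure. Using the adjoint identity
\begin{align*}
\langle \Lambda_i^{*}e_{ij}, g_k\rangle_H = \langle e_{ij}, \Lambda_i g_k\rangle_{H_i},
\end{align*}
together with the fact that $\Lambda_i$ is $[\delta_H\to\delta_{H_i}]$-computable (so $\Lambda_i g_k$ is computable in $H_i$ uniformly in $i,k$) and that the inner product on each $H_i$ is computable, I would conclude that the triple sequence $\bigl(\langle \Lambda_i^{*}e_{ij}, g_k\rangle\bigr)_{i,j,k\in\NN}$ is a computable sequence of scalars.

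The central step is to assemble a Cauchy representation of $\Lambda_i^{*}e_{ij}$ from these Fourier coefficients together with the hypothesized computable norms. Writing $x_{ij}=\Lambda_i^{*}e_{ij}$ and $S_N^{ij}=\sum_{k\leq N}\langle x_{ij},g_k\rangle g_k$, each $S_N^{ij}$ is a computable point of $H$ uniformly in $(i,j,N)$ as a rational linear combination of computable basis elements, and by Parseval
\begin{align*}
\|x_{ij} - S_N^{ij}\|^2 = \|x_{ij}\|^2 - \sum_{k\leq N}|\langle x_{ij},g_k\rangle|^2.
\end{align*}
The right-hand side is a computable real tending to $0$ as $N\to\infty$, so for each target precision $2^{-m}$ an effective search yields some $N=N(i,j,m)$ with $\|x_{ij}-S_N^{ij}\|<2^{-m}$; selecting the associated subsequence of partial sums then produces a Cauchy name of $x_{ij}$ uniformly in $(i,j)$, giving computability of the map $\langle i,j\rangle\mapsto \Lambda_i^{*}e_{ij}$.

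The principal obstacle is the rigorous justification of this effective tail control: one must certify, using only the computable approximations available to $\|x_{ij}\|^2$ (from the hypothesis) and to the partial Parseval sums (from the previous paragraph), that the remainder falls below a given threshold $2^{-2m}$. This is precisely where the assumption that $\{\|\Lambda_i^{*}e_{ij}\|\}$ is a computable double sequence is used essentially, as shown by Example 3.5, where the conclusion fails in its absence.
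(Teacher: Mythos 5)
Your proof is correct, and it takes a more self-contained route than the paper. The paper argues at the level of the dual space: by evaluation and type conversion the functional $g_{ij}(f)=\langle \Lambda_i f, e_{ij}\rangle$ is computable as a continuous function uniformly in $(i,j)$; the hypothesis $\|g_{ij}\|=\|\Lambda_i^* e_{ij}\|$ computable then upgrades this to a $\delta_{H'}$-name (the dual representation carries the norm), and the computable Fr\'echet--Riesz theorem of Brattka--Yoshikawa is invoked as a black box to produce $\Lambda_i^* e_{ij}$ as a computable point of $H$. You instead unfold that black box: you compute the Fourier coefficients $\langle \Lambda_i^* e_{ij}, g_k\rangle=\langle e_{ij},\Lambda_i g_k\rangle$ directly and use the hypothesized norms together with Parseval to get effective tail bounds on the partial sums, hence a Cauchy name. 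The norm hypothesis enters at the analogous point in both arguments (in the paper it is what makes $g_{ij}$ a \emph{named} element of $H'$ rather than merely a computable function; in yours it is what makes the unbounded search for $N(i,j,m)$ terminate with a certificate), so the two proofs are mathematically the same theorem applied versus reproved, and yours has the advantage of making visible exactly where computability would fail without the hypothesis, consistent with Example 3.5. Two small points to tighten: the existence of a computable orthonormal basis of $H$ should be cited rather than attributed to a ``computable Gram--Schmidt procedure,'' since naive Gram--Schmidt requires deciding whether a residual vanishes, which is not effective in general (the result itself is standard, e.g.\ in Pour-El--Richards or Brattka--Yoshikawa); and you should note explicitly that all of your constructions are uniform in $(i,j)$, since the conclusion is that the \emph{double sequence} $\langle i,j\rangle\mapsto \Lambda_i^* e_{ij}$ is computable, not merely each entry separately.
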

\begin{proof}
For $i,j\in \NN$, consider the map $g_{ij}:H\rightarrow \mathbb{F}$ given by $g_{ij}(f)=\langle \Lambda_{i}f,e_{ij}\rangle$, for $f\in H$. Since $e_{ij}$ is a computable element of $H_{i}$ for given $i,j \in \NN$, therefore, using the computability of the sequence $(\Lambda_{i})$ and the evaluation property, we get that the map
\begin{align*}
(i,j,f) \rightarrow \langle \Lambda_{i}f, e_{ij}\rangle, i,j\in \NN, f\in H
\end{align*}
is computable. By Type conversion, the map $\Phi: \NN \times \NN \rightarrow \mathbb{F}^{H}$ given by $(i,j)\rightarrow g_{ij}$ is computable. Since $\|g_{ij}\|= \|\Lambda_{i}^{*}e_{ij}\|$, $i,j\in \NN$, we get that $g_{ij}$ is $\delta_{H^{\prime}}$ computable for given $i,j\in\NN$. Now by computable Frechet Riesz Theorem [5], we get $\Lambda_i^{*}e_{ij}$ is $\delta_H$ computable for given $i,j\in\NN.$ Hence, $\{\Lambda_i^{*}e_{ij}:i,j\in \NN\}$ is a computable frame for $H$.
\end{proof}
In Example 3.5, one can observe that $(U\delta_i)$ is a computable frame for $l^2$ but $U^{*}$ is not computable. This shows that if $\{\Lambda_i^{*}e_{ik}:i,k\in \NN\}$ is a computable frame, then $\{\Lambda_i:i\in \NN\}$ need not be a computable $g$-frame for $H$. We now obtain a sufficient condition for the same.
\begin{theorem}
Let $H$ be a computable Hilbert space and $(H_i)_{i\in \NN}$ be a sequence of computable Hilbert spaces. If $\{\Lambda_i^{*}e_{ik}:i,k\in \NN\}$ is a computable frame for $H$ such that for each computable $f\in H$, $((\langle f,\Lambda_i^{*}e_{ik}\rangle)_k)_{i}$ is a computable sequence in $l^{2}$, then $\{\Lambda_i:i\in\NN\}$ is a computable $g$-frame for $H$ with respect to $\{H_i:i\in\NN\}$.
\end{theorem}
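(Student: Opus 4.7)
The plan is to split the proof into two parts: first verifying the $g$-frame inequalities (the non-computable content), then establishing that the sequence $(\Lambda_{i})$ is computable in the sense of Definition 3.1.

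For the inequalities, I will invoke the result of Sun [16] cited just before Example 3.5. Since $(e_{ik})_{k\in\NN}$ is an orthonormal basis of $H_{i}$, Parseval's identity applied in $H_{i}$ yields
\[ \|\Lambda_{i}f\|^{2}=\sum_{k\in\NN}|\langle \Lambda_{i}f,e_{ik}\rangle|^{2}=\sum_{k\in\NN}|\langle f,\Lambda_{i}^{*}e_{ik}\rangle|^{2}, \]
and summing over $i$ converts the frame inequalities for $\{\Lambda_{i}^{*}e_{ik}:i,k\in\NN\}$ directly into $g$-frame inequalities for $\{\Lambda_{i}:i\in\NN\}$ with the same bounds $A$ and $B$.

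For the computability, by the description of the representation $\delta^{*}$ given after Definition 3.1, I must produce, given $i\in\NN$, a $[\delta_{H}\rightarrow \delta_{H_{i}}]$-name of $\Lambda_{i}$. By type conversion (exactly as in the proof of Theorem 3.6) this reduces to showing that the evaluation map $(i,f)\mapsto \Lambda_{i}f$ is uniformly computable. Given a $\delta_{H}$-name of $f$, the hypothesis supplies a name of the coefficient array $((\langle f,\Lambda_{i}^{*}e_{ik}\rangle)_{k})_{i}$ as a computable sequence in $l^{2}$. The key identity
\[ \Lambda_{i}f=\sum_{k\in\NN}\langle f,\Lambda_{i}^{*}e_{ik}\rangle\, e_{ik}\quad\text{in } H_{i} \]
(valid since $(e_{ik})_{k}$ is an orthonormal basis of $H_{i}$) lets me recover $\Lambda_{i}f$ from these coefficients together with the computable orthonormal basis $(e_{ik})_{k}$ of the computable Hilbert space $H_{i}$. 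The $l^{2}$-norm information furnished by the hypothesis controls the tail of the partial sums and so provides effective Cauchy rates in $H_{i}$, producing a $\delta_{H_{i}}$-name of $\Lambda_{i}f$ uniformly in $i$ and in the name of $f$.

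The step I expect to be most delicate is the uniformity issue: the hypothesis is phrased pointwise (``for each computable $f\in H$''), whereas type conversion demands uniform computability in a $\delta_{H}$-name of $f$. My intention is to read the hypothesis in its intended uniform sense, namely that the operator sending a $\delta_{H}$-name of $f$ to a name of the coefficient array as a computable sequence in $l^{2}$ is itself computable, and then to verify that effective $l^{2}$-summation against the computable orthonormal basis $(e_{ik})_{k}$ of $H_{i}$ is likewise uniform in $i$. Once this uniformity is in hand, composing the two computable operations and applying type conversion delivers the required $\delta^{*}$-computability of the sequence $(\Lambda_{i})$, completing the proof.
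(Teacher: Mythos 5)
Your proposal follows essentially the same route as the paper: obtain the coefficient sequence $(\langle f,\Lambda_{i}^{*}e_{ik}\rangle)_{k}$ uniformly from the computable double sequence $\{\Lambda_{i}^{*}e_{ik}\}$ via type conversion, use the hypothesis to make $\|\Lambda_{i}f\|^{2}=\sum_{k}|\langle f,\Lambda_{i}^{*}e_{ik}\rangle|^{2}$ computable and hence recover a $\delta_{H_{i}}$-name of $\Lambda_{i}f$, and transfer the frame inequalities to $g$-frame inequalities by Parseval. The uniformity issue you flag (pointwise hypothesis versus the uniform computability demanded by type conversion) is real, but the paper's own proof makes exactly the same silent leap, so your treatment matches theirs.
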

\begin{proof}
Since $\{\Lambda_i^{*}e_{ik}:i,k\in \NN\}$ is a computable double sequence in $H$ and $\langle \cdot \rangle$ is computable, there exists a computable map
$\phi:\NN\times\NN\times H \rightarrow \mathbb{F}$ given by $(i,k,f) \rightarrow\langle f,\Lambda_i^{*}e_{ik}\rangle$. By type conversion, the map $\phi^{\prime}:\NN\times H \rightarrow \mathbb{F}^{\NN}$ given by $(i,f) \rightarrow (\langle f,\Lambda_i^{*}e_{ik}\rangle)_k$ is a computable map. This gives $\delta_{\mathbb{F}}^{\NN}$ name of $(\langle \Lambda_i f,e_{ik}\rangle)_k$ for given $i\in\NN$ and $f\in H$. Also, for given $i\in\NN$ and $f\in H$, $(\langle f, \Lambda_{i}^{*}e_{ik} \rangle)_k$ is a computable element of $l^2$ and so, $\|(\langle f,\Lambda_i^{*}e_{ik} \rangle)_k\|_{l^2}=\sum_{k=1}^{\infty}|\langle f,\Lambda_i^{*}e_{ik} \rangle|^2=\|\Lambda_i f\|^2$ is $\delta_{\RR}$-computable. Thus, given $i\in\NN$, $\Lambda_i$ is $[\delta_H\rightarrow \delta_{H_i}]$ computable. As $\{\Lambda_i^{*}e_{ik}:i,k\in \NN\}$ is a frame for $H, \{\Lambda_i:i\in\NN\}$ is a $g$-frame for $H$. Hence, $\{\Lambda_i:i\in \NN\}$ is a computable $g$-frame for $H$ with respect to $\{H_i:i\in\NN\}.$
\end{proof}
\textbf{Remark.}
One can observe that in Example 3.5, $(\langle f,Ue_{k}\rangle)_{k}$ is not computable in $l^2$ for computable $f= e_{0}$.\\

We now state a generalized version of Theorem 3.6 wherein the computable orthonormal basis $\{e_{ik}:k\in\NN\}$ of $H_{i}$ is replaced by  a computable frame in $H_{i}$ for each $i\in \NN$.

\begin{theorem}
Let $\{\Lambda_i:i\in\NN\}$ be a computable $g$-frame for $H$ with respect to $\{H_i:i\in\NN\}$ and $\{g_{ij}:j\in \NN\}$ be a computable frame for $H_i$ with bounds $A_i$ and $B_i$, for all $i\in I$ such that $0<A=inf A_i \ \text{and} \ B=sup B_i<\infty$. Then, $\{\Lambda_i^{*}g_{ij}:i,j\in \NN\}$ is a computable frame for $H$ provided $\{\|\Lambda_i^{*}g_{ij}\|:i,j\in \NN\}$ is a computable double sequence.
\end{theorem}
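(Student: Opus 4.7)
The plan is to adapt the proof strategy of Theorem 3.6, with $g_{ij}$ playing the role of $e_{ij}$, and then to verify the frame inequalities using the uniform bounds $A$ and $B$. First, for each $i,j\in\NN$ I would introduce the functional $h_{ij}:H\to\mathbb{F}$ given by $h_{ij}(f)=\langle\Lambda_i f,g_{ij}\rangle=\langle f,\Lambda_i^{*}g_{ij}\rangle$. Since $\{\Lambda_i\}$ is a computable $g$-frame and $\{g_{ij}\}$ is a computable double sequence in the respective $H_i$, and since the inner product on each $H_i$ is computable, the map $(i,j,f)\mapsto\langle\Lambda_i f,g_{ij}\rangle$ is computable. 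Type conversion then yields a computable map $\NN\times\NN\to\mathbb{F}^{H}$, $(i,j)\mapsto h_{ij}$.

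Next, exactly as in Theorem 3.6, I would combine this with the hypothesis that $\{\|\Lambda_i^{*}g_{ij}\|\}_{i,j}$ is a computable double sequence. Because $\|h_{ij}\|=\|\Lambda_i^{*}g_{ij}\|$, the functionals $h_{ij}$ are computable in the $\delta_{H^{\prime}}$-representation, and the computable Fr\'echet--Riesz theorem [5] produces the $\delta_H$-computability of the double sequence $\{\Lambda_i^{*}g_{ij}:i,j\in\NN\}$.

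To finish, I would verify the frame inequalities for $\{\Lambda_i^{*}g_{ij}\}$. For any $f\in H$ and any fixed $i$, the frame property of $\{g_{ij}\}_{j}$ in $H_i$ with bounds $A_i,B_i$ gives
\begin{align*}
A_i\|\Lambda_i f\|^{2}\leq\sum_{j\in\NN}|\langle\Lambda_i f,g_{ij}\rangle|^{2}\leq B_i\|\Lambda_i f\|^{2}.
\end{align*}
Using $0<A=\inf A_i$ and $B=\sup B_i<\infty$, replacing $A_i,B_i$ by $A,B$ and summing over $i$, followed by applying the $g$-frame bounds $A^{\prime},B^{\prime}$ of $\{\Lambda_i\}$, one obtains
\begin{align*}
AA^{\prime}\|f\|^{2}\leq\sum_{i,j\in\NN}|\langle f,\Lambda_i^{*}g_{ij}\rangle|^{2}\leq BB^{\prime}\|f\|^{2},
\end{align*}
establishing the frame property. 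Combined with the $\delta_H$-computability proved above, this gives that $\{\Lambda_i^{*}g_{ij}:i,j\in\NN\}$ is a computable frame for $H$.

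The main obstacle, as in Theorem 3.6, is not the purely analytic frame estimate (which is essentially Sun's classical argument with $A,B$ replacing $A_i,B_i$), but the effective step: getting a computable name of $\Lambda_i^{*}g_{ij}$ as an element of $H$ rather than merely a name of the associated functional $h_{ij}$. That is precisely where the hypothesis on the computability of the double sequence $\{\|\Lambda_i^{*}g_{ij}\|\}$ is used, via Fr\'echet--Riesz, to upgrade $[\delta_H\to\delta_{\mathbb{F}}]$-computability of $h_{ij}$ to $\delta_H$-computability of $\Lambda_i^{*}g_{ij}$.
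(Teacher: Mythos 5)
Your proposal is correct and follows essentially the same route as the paper: the paper likewise defines the functional $G_{ij}(f)=\langle f,\Lambda_i^{*}g_{ij}\rangle$, uses the hypothesis on the norms $\|\Lambda_i^{*}g_{ij}\|$ to get $\delta_{H^{\prime}}$-computability, and applies the computable Fr\'echet--Riesz theorem, deferring the details to the argument of Theorem 3.6. The only cosmetic difference is that the paper cites Theorem 3.4 of [1] for the purely analytic frame inequalities, which you instead verify directly by the standard estimate with $A,B$ in place of $A_i,B_i$.
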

\begin{proof}
$\{\Lambda_i^{*}g_{ij}:i,j\in \NN\}$ is a frame for $H$ by Theorem 3.4 [1]. The computability of the frame $\{\Lambda_i^{*}g_{ij}:i,j\in \NN\}$ follows by proving that the function $G_{ij}: H\rightarrow F$ given by $G_{ij}(f)= \langle f, \Lambda_{i}^{*}g_{ij}\rangle$ for $i,j\in \NN$ is $\delta_{H^{'}}$ computable and then using the converse of Frechet Riesz Theorem[5]. The proof is on similar lines as that of Theorem 3.6.
\end{proof}

Now, the following example shows that Theorem 3.7 cannot be generalized simply by replacing the computable orthonormal basis $\{e_{ik}:k\in\NN\}$ of $H_{i}$ by a computable frame in $H_{i}$, $i\in \NN$.
\begin{example}
Consider $U:l^2\rightarrow l^2$ given by\\
 \[\left(\begin{array}{ccccc}
 1&0 &0 & 0 & \cdots\\
 a_{1}& 1 & 0 & 0 & \cdots\\
 a_{2} & 0 & 1 & 0 & \cdots\\
 \vdots &\vdots &\vdots & \ddots\end{array}\right)\]

where $(a_i)$ is a computable sequence of positive real numbers such that $\|(a_i)\|_{l^2} < 2 $ exists but is not computable. Then $U$ is a non computable $g$-frame for $l^{2}$. The frame $(g_{i})=((1,0,0...),(-a_{1},1,0...),(-a_{2},0,1,0...)...)$ is a computable frame for $l^{2}$. The frame $(U^{*}g_{i})$=
$((1,0,0..),(0,1,0,..),(0,0,1..)...)$ is again a computable frame for $l^{2}$ such that $(\langle f,U^{*}g_{i}\rangle)_{i}$ = $(\langle f,e_{i}\rangle)_{i}$ is computable in $l^{2}$ for each computable $f\in H$.
\end{example}

We now prove the generalization of Theorem 3.7 with some additional assumption.
\begin{theorem}
Let $(H_i)$ be a sequence of computable Hilbert spaces. Let $\{g_{ij}:j \in \NN\}$ be a computable frame for $H_i$ with computable frame operator $S_{i}$, bounds $A_i, B_i,$ for all $i\in\NN$ such that $0<A=inf A_i \ \text{and} \ B=sup B_i<\infty.$ Let $\{\Lambda_i^{*}g_{ij}:i,j \in \NN\}$ be a computable frame for a computable Hilbert space $H$ such that for each computable $f\in H$, $((\langle f,\Lambda_i^{*}g_{ij}\rangle)_j)_{i}$ is a computable sequence in $l^{2}$. Then, $\{\Lambda_i:i\in\NN\}$ is a computable $g$-frame for $H$ with respect to $\{H_i:i\in\NN\}$.
\end{theorem}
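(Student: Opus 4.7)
The plan is to invert the frame operator $S_i$ and recover $\Lambda_i f$ from its frame coefficients $(\langle \Lambda_i f,g_{ij}\rangle)_j$ via the reconstruction formula in $H_i$. First, observe that $\langle f,\Lambda_i^{*}g_{ij}\rangle = \langle \Lambda_i f,g_{ij}\rangle$ for every $f\in H$. Since $\{\Lambda_i^{*}g_{ij}:i,j\in\NN\}$ is a computable double sequence in $H$ and the inner product on $H$ is computable, evaluation together with type conversion (exactly as in the proofs of Theorems 3.6--3.8) and the standing hypothesis that $((\langle f,\Lambda_i^{*}g_{ij}\rangle)_j)_i$ is a computable sequence in $l^{2}$ for each computable $f$ give that the map $(i,f)\mapsto(\langle \Lambda_i f,g_{ij}\rangle)_j$ is computable into $l^{2}$, uniformly in $i\in\NN$.

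Next, I would establish that $S_i^{-1}$ is computable uniformly in $i$. The uniform frame bounds force $A\,I\le S_i\le B\,I$ as self-adjoint operators on $H_i$, so that setting $\alpha=2/(A+B)$ the Neumann series
\begin{align*}
S_i^{-1} \;=\; \alpha\sum_{n=0}^{\infty}(I-\alpha S_i)^{n}
\end{align*}
converges in operator norm with contraction factor $(B-A)/(B+A)<1$. Since $S_i$ is computable by assumption and both $\alpha$ and this contraction factor are available from effective bounds on $A$ and $B$, each partial sum produces a computable approximation to $S_i^{-1}g\in H_i$ with a computable error bound; hence $(i,g)\mapsto S_i^{-1}g$ is computable. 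The synthesis operator $T_i:l^{2}\to H_i$ of $\{g_{ij}\}_j$, defined by $(d_j)\mapsto\sum_j d_j g_{ij}$, is computable uniformly in $i$ as well, since it is $\sqrt{B_i}$-bounded and the $l^{2}$-tail of a computable $l^{2}$-element carries a computable bound.

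Combining these three computable ingredients, the reconstruction identity $\Lambda_i f = S_i^{-1}\,T_i\bigl((\langle \Lambda_i f,g_{ij}\rangle)_j\bigr)$ produces $\Lambda_i f$ computably from $(i,f)$, so $\Lambda_i$ is $[\delta_H\to\delta_{H_i}]$-computable uniformly in $i\in\NN$. That $\{\Lambda_i\}$ satisfies the $g$-frame inequalities in $H$ is the classical statement already used in Theorem 3.8, following from the frame property of $\{\Lambda_i^{*}g_{ij}\}$ together with the uniform bounds on $\{g_{ij}\}_j$. The main obstacle is precisely the uniform computability of $S_i^{-1}$: although each $S_i$ is individually computable, the Neumann-series estimate requires effective access to the global constants $A$ and $B$ (e.g.\ as a computable positive lower bound and a computable finite upper bound), and in the absence of such effective access a uniform algorithm does not appear to be available.
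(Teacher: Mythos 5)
Your proof is correct and follows the same overall route as the paper's: both recover $\Lambda_i f$ from the computable $l^2$-element $(\langle \Lambda_i f, g_{ij}\rangle)_j=(\langle f,\Lambda_i^{*}g_{ij}\rangle)_j$ by applying the synthesis operator of $\{g_{ij}\}_j$ and then $S_i^{-1}$, via the reconstruction identity $\Lambda_i f=S_i^{-1}\bigl(\sum_{j}\langle \Lambda_i f,g_{ij}\rangle g_{ij}\bigr)$, and both delegate the $g$-frame inequalities to the classical result (Theorem 3.4 of [1]). The one genuine difference is how you invert $S_i$: the paper appeals to the computable Banach Inverse Mapping Theorem [6], while you run an explicit Neumann series $S_i^{-1}=\alpha\sum_{n}(I-\alpha S_i)^{n}$ with $\alpha=2/(A+B)$ and contraction factor $(B-A)/(B+A)<1$. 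Your version buys something the paper's does not: the computable Banach Inverse Mapping Theorem is a non-uniform statement, yielding computability of $S_i^{-1}$ for each fixed $i$ separately, whereas the Neumann series, driven by the global bounds $A$ and $B$, inverts all the $S_i$ by a single algorithm --- which is what the definition of a computable sequence of operators (Definition 3.1) actually requires of $(\Lambda_i)$. Your closing caveat about needing ``effective access'' to $A$ and $B$ is, however, unnecessary: $A>0$ and $B<\infty$ are fixed reals, so rationals $A'$, $B'$ with $0<A'\le A\le B\le B'$ exist and may simply be hardwired into the machine as constants; existence, not computability, of such bounds is all the argument needs.
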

\begin{proof}
The sequence $(\Lambda_{i})$ is a $g$-frame by Theorem 3.4 [1]. Since the synthesis operator corresponding to the computable frame $\{g_{ij}:j\in \NN\}$ is computable by Theorem 3.3 [10], for given $i\in\NN$, $\sum_{j=1}^{\infty}\langle \Lambda_{i}f,g_{ij}\rangle g_{ij}$ is $\delta_{H_i}$ computable.
By Frame representation corresponding to the frame $\{g_{ij}:j\in \NN\},$
\begin{align*}
\Lambda_i f=\sum_{j=1}^{\infty}\langle \Lambda_if,g_{ij}\rangle S^{-1}_{i}g_{ij}=S^{-1}_{i}\bigg(\sum_{j=1}^{\infty}\langle \Lambda_{i} f,g_{ij}\rangle g_{ij}\bigg), \ \text{for all} \ i\in\NN, f\in H.
\end{align*}
By computable Banach Inverse Mapping Theorem [6], $S^{-1}_{i}$ is computable and therefore, $\Lambda_i f$ is $\delta_{H_i}$ computable for given $i\in \NN$. Thus, $\{\Lambda_i\}$ is a computable $g$-frame for $H$ with respect to $\{H_i:i\in\NN\}$.
\end{proof}

\textbf{Remark.}
One can observe that the frame operator of the frame $(g_{i})$ in Example 3.9, is not computable as $S(e_{0})= (1+a_{1}^{2}+a_{2}^{2}+...,-a_{1},-a_{2}...)$ where $1+a_{1}^{2}+a_{2}^{2}+...$= $\|(a_i)\|_{l^2}^{2}$ is not $\delta_{\RR}$-computable.\\

We now analyze the synthesis operator, $T:(\sum_{i\in\NN}\oplus H_i)_{l_2}\rightarrow H$ given by $T((f_i))=\sum_{i=1}^{\infty}\Lambda_i^{*}f_i$ associated to a computable $g$-frame $(\Lambda_i)$. In Example 3.5, $U$ is a computable $g$-frame for $l^2$. Its synthesis operator is given by $T((f_i))=U^{*}(f_i)$, where $U^{*}$ is given by
 \[\left(\begin{array}{ccccc}
 1& 0 & 0 & \cdots\\
 a_1 & 1 & 0 & \cdots\\
 a_2 & a_1 & 1 & \cdots\\
 \vdots &\vdots &\vdots & \ddots\end{array}\right).\]
 Clearly, $T$ is not computable. Therefore, given a computable $g$-frame, the associated synthesis operator need not be computable, unlike the result for computable frames. Therefore, it is natural to find a sufficient condition for the same. The following result gives such a sufficient condition.
 \begin{theorem}
 Let $H$ be a computable Hilbert space, $(H_i)_{i\in\NN}$ be a sequence of computable Hilbert spaces. If a $g$-frame $\{\Lambda_i\in L(H,H_i):i\in\NN\}$ is computable such that $\{\|\Lambda_i^{*}e_{ik}\in\NN\|\}$ is a computable double sequence, $(e_{ik})_k$ being the computable orthonormal basis of $H_i$ for $i\in \NN$, then the synthesis operator $T:(\sum_{i\in\NN}\oplus H_i)_{l_2}\rightarrow H$ given by $T((f_i))=\sum_{i=1}^{\infty}\Lambda_{i}^{*}f_i$ is computable.
 \end{theorem}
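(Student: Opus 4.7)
The plan is to factor $T$ as a composition of two computable maps. Expanding each $f_i\in H_i$ in the computable orthonormal basis $(e_{ik})_{k\in\NN}$, I rewrite
\begin{align*}
T((f_i))=\sum_{i=1}^{\infty}\Lambda_i^{*}f_i=\sum_{i=1}^{\infty}\sum_{k=1}^{\infty}\langle f_i,e_{ik}\rangle\,\Lambda_i^{*}e_{ik},
\end{align*}
so that $T=T'\circ\Phi$, where $\Phi:(\sum_{i\in\NN}\oplus H_i)_{l_2}\to l^{2}(\NN\times\NN)$ is the Fourier coefficient map $(f_i)\mapsto(\langle f_i,e_{ik}\rangle)_{i,k}$ and $T'$ is the ordinary synthesis operator of the sequence $\{\Lambda_i^{*}e_{ik}:i,k\in\NN\}$ in $H$. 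It therefore suffices to establish computability of $\Phi$ and of $T'$ separately.

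For $T'$ I will invoke Theorem 3.6: since $(\Lambda_i)$ is a computable $g$-frame and $\{\|\Lambda_i^{*}e_{ik}\|\}$ is a computable double sequence by hypothesis, $\{\Lambda_i^{*}e_{ik}:i,k\in\NN\}$ is a computable frame for $H$. The known fact that the synthesis operator of a computable frame is computable (Theorem 3.3 of [10], as cited in the proof of Theorem 3.10) then gives computability of $T'$, after a routine reindexing of the index set $\NN\times\NN$ via the computable Cantor pairing.

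For $\Phi$ I will start from a $\delta_{(\sum_{i\in\NN}\oplus H_i)_{l_2}}$-name of $(f_i)$; by Theorem 3.3 this yields a $\delta_{Fourier}$-name, which by definition delivers the double sequence $((\langle f_i,e_{ik}\rangle)_k)_i$ computably in $\mathbb{F}$ together with the real number $\sum_{i=1}^{\infty}\|f_i\|^{2}$. After reindexing via the Cantor pairing, Parseval's identity
\begin{align*}
\sum_{i,k}|\langle f_i,e_{ik}\rangle|^{2}=\sum_{i=1}^{\infty}\|f_i\|^{2}
\end{align*}
shows that the real number supplied by the $\delta_{Fourier}$-name is exactly the squared $l^{2}$-norm of the reindexed coefficient sequence. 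The coefficient entries together with this norm are precisely the data required for a Cauchy name in $l^{2}$, so $\Phi$ is computable; composing, $T=T'\circ\Phi$ is computable.

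The main obstacle will be making sure the input fed into $T'$ is in the correct form for the $l^{2}$ Cauchy representation: one needs not just the coordinates but also the $l^{2}$-norm in computable form. This is exactly why the hypothesis $\delta_{(\sum)}$-name is built to carry the extra $\sum\|f_i\|^{2}$ component, and why Parseval has to be brought in at the reindexing step; the rest of the argument is a bookkeeping application of Theorem 3.3, Theorem 3.6, and computable synthesis for ordinary frames.
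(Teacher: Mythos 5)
Your proposal is correct and follows essentially the same route as the paper: both reduce the computability of $T$ to the computability of the double sequence $T(E_{ij})=\Lambda_i^{*}e_{ij}$ furnished by Theorem 3.6. The paper states this reduction in a single line, whereas you make it explicit by factoring $T=T'\circ\Phi$ through $l^{2}(\NN\times\NN)$, using Theorem 3.3 together with Parseval for $\Phi$ and the computable synthesis operator of [10] for $T'$; this supplies details the paper leaves implicit but is not a genuinely different argument.
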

 \begin{proof}
 $T$ is a bounded operator as $\{\Lambda_i\in L(H,H_i):i\in\NN\}$ is a $g$-frame for $H$. The computability of $T$ follows from the fact that $T(E_{ij})=\Lambda_i^{*}(e_{ij})$ and $\{\Lambda_i^{*}e_{ij}:i,j\in \NN\}$ is a computable double sequence by Theorem 3.6.
 \end{proof}
 Recall that in the case of computable frames, computability of synthesis operator corresponding to a frame implies the computability of the frame [10]. But, this is not the case for $g$-frames as shown by the following example.

 \begin{example}
 Consider the bounded linear operator  $U:l^2\rightarrow l^2$ given by
\[\left(\begin{array}{ccccc}
 1& 0 & 0 & \cdots\\
 a_1 & 1 & 0 & \cdots\\
 a_2 & a_1 & 1 & \cdots\\
 \vdots &\vdots &\vdots & \ddots\end{array}\right)\]
 $(a_i)$ being a computable sequence in $F^{\NN}$ such that $\|(a_i)\|_{l^2}$ exist but is not computable. $U$ is a non-computable $g$-frame, whereas its synthesis operator $T:l^2\rightarrow l^{2}$ given by $T((c_i))=U^{*}((c_i))$ is a computable operator.
 \end{example}

 The following result gives a sufficient condition for the computability of a $g$-frame.
\begin{theorem}
 If the synthesis operator $T:(\sum_{i\in\NN}\oplus H_i)_{l_2}\rightarrow H$ given by $T((f_i))=\sum_{i=1}^{\infty}\Lambda_i^{*}f_i$ for a $g$-frame $\{\Lambda_i\in L(H,H_i):i\in\NN\}$ is a computable surjective operator such that for each computable $f\in H, ((\langle f, T(E_{ij})\rangle)_j)_i$ is a computable sequence in $l^2$. Then, $\{\Lambda_i:i\in\NN\}$ is a computable $g$-frame for $H$ with respect to $\{H_i:i\in\NN\}.$
\end{theorem}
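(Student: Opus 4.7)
The plan is to reduce this theorem directly to Theorem 3.7 by exploiting the action of $T$ on the canonical orthonormal basis $(E_{ij})$ of $\big(\sum_{i\in\NN}\oplus H_i\big)_{l_2}$. A straightforward computation from the definition of $T$ and $E_{ij}$ gives
\begin{align*}
T(E_{ij}) \;=\; \sum_{k=1}^{\infty}\Lambda_k^{*}(E_{ij})_k \;=\; \Lambda_i^{*}e_{ij}, \qquad i,j\in\NN,
\end{align*}
so the stated hypothesis that $((\langle f,T(E_{ij})\rangle)_j)_i$ is a computable sequence in $l^{2}$ for each computable $f\in H$ is precisely the hypothesis on $((\langle f,\Lambda_i^{*}e_{ij}\rangle)_j)_i$ appearing in Theorem 3.7.

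First I would verify that $\{\Lambda_i^{*}e_{ij}:i,j\in\NN\}$ is a computable double sequence in $H$. Since $(e_{ij})_j$ is a computable orthonormal basis of $H_i$ for each $i\in\NN$, the elements $E_{ij}$ form a computable double sequence in $\big(\sum_{i\in\NN}\oplus H_i\big)_{l_2}$, and as $T$ is by assumption $[\delta_{(\sum_{i\in\NN}\oplus H_i)_{l_2}}\to\delta_H]$-computable, composition with the evaluation map yields the computability of $(i,j)\mapsto T(E_{ij})=\Lambda_i^{*}e_{ij}$. Next I would argue that $\{\Lambda_i^{*}e_{ij}:i,j\in\NN\}$ is in fact a frame for $H$: this is a classical consequence of $T$ being a bounded surjection between Hilbert spaces that sends the orthonormal basis $(E_{ij})$ to this sequence, and it also follows from the $g$-frame/frame correspondence of Sun [16] used elsewhere in the paper. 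Combining these two observations shows that $\{\Lambda_i^{*}e_{ij}:i,j\in\NN\}$ is a computable frame for $H$.

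Finally, with the computable frame property of $\{\Lambda_i^{*}e_{ij}\}$ in hand and the hypothesis on $((\langle f,\Lambda_i^{*}e_{ij}\rangle)_j)_i$ matched to the formulation of Theorem 3.7, a direct appeal to Theorem 3.7 yields that $\{\Lambda_i:i\in\NN\}$ is a computable $g$-frame for $H$ with respect to $\{H_i:i\in\NN\}$. The main obstacle here is conceptual rather than computational: one must recognize that the hypotheses have been arranged so as to reduce exactly to the situation of Theorem 3.7 via the identity $T(E_{ij})=\Lambda_i^{*}e_{ij}$. Once this observation is made, the surjectivity of $T$ delivers the frame property, the computability of $T$ delivers the computable double sequence, and the remaining work has already been carried out in Theorem 3.7.
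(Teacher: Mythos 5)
Your proposal is correct and follows essentially the same route as the paper: identify $T(E_{ij})=\Lambda_i^{*}e_{ij}$, deduce that this double sequence is computable from the computability of $T$, note that it is a frame (the paper gets this from $\{\Lambda_i\}$ being a $g$-frame via Sun's correspondence, which you also mention), and then invoke Theorem 3.7. No substantive differences.
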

\begin{proof}
Since $T$ is a computable operator, $\{\Lambda_i^{*}e_{ij}:i,j\in \NN\}=\{T(E_{ij}):i,j\in \NN\}$ is a computable double sequence in $H$. As $\{\Lambda_i:i\in \NN\}$ is a $g$-frame for $H$ with respect to $\{H_i:i\in\NN\}, \{\Lambda_i^{*}e_{ij}:i,j\in \NN\}$ is a computable frame for $H$ such that $((\langle f, \Lambda_i^{*}e_{ij}\rangle)_j)_i=((\langle f, T(E_{ij})\rangle)_j)_i$ is a computable sequence in $l^2$ for given $f\in H$. Now, the result follows from Theorem 3.7.
\end{proof}

\textbf{Remark.}
One may observe that in Example 3.12,
\begin{align*}
(\langle f, T(e_{j})\rangle)_j=(\langle f, U^{*}(e_{j})\rangle)_j=(\langle Uf, e_{j}\rangle)_j=Uf
\end{align*}
is not computable in $l^2$, if we take the computable element $f=e_0$.\\

We now analyze the computability of the analysis operator $T^{*}:H\rightarrow (\sum\oplus H_i)_{l^2}$ given by $T^{*}(f)=(\Lambda_i f)$, associated to a computable $g$-frame. In the case of computable frames, the associated analysis operator need not be computable. Since $g$-frames are a generalization of frames, we expect similar situation in the case of $g$-frames. This can be verified by defining the $g$-frame $\Lambda_i:H\rightarrow \CC$, given by $\Lambda_i(f)=\langle f, f_i\rangle, i\in\NN$, where $(f_i)$ is the computable frame in Example 3.10 [11].\\
\\We now give a sufficient condition for the computability of the analysis operator.

\begin{theorem}
Let $H$ be a computable Hilbert space, $(H_i)$ be a sequence of computable Hilbert space with computable orthonormal basis $\{e_{ik}:k\in\NN\}$ for each $i\in\NN$. Let $\{\Lambda_i\in L(H,H_i):i\in \NN\}$ be a computable $g$-frame for $H$. The analysis operator $T^{*}:H\rightarrow (\sum\oplus H_i)_{l^2}$ given by $T^{*}(f)=(\Lambda_i f)_{i=1}^{\infty}$ is computable, provided the corresponding frame $\{\Lambda_i^{*}e_{ik}:i,k\in \NN\}$ has computable analysis operator.
\end{theorem}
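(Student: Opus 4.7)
The plan is to produce, uniformly in a $\delta_H$-name of $f$, the two pieces of data required by the representation $\delta_{(\sum_{i\in\NN}\oplus H_i)_{l_2}}$ defined earlier, namely the component sequence $(\Lambda_i f)_{i\in\NN}$ with a $[\delta_{H_0},\delta_{H_1},\ldots]$-name and the real number $\sum_{i=1}^{\infty}\|\Lambda_i f\|^2$ with a $\delta_\RR$-name. Once both can be computed from $f$, the pairing of the two names gives a $\delta_{(\sum_{i\in\NN}\oplus H_i)_{l_2}}$-name of $T^{*}(f)$, which is exactly what it means for $T^{*}$ to be computable.

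For the component sequence: the hypothesis that $\{\Lambda_i:i\in\NN\}$ is a computable $g$-frame says (via the representation $\delta^{*}$ on $L^{*}(H)$ described after Definition 3.1) that the map $i\mapsto\Lambda_i$ is computable as a function $\NN\to L^{*}(H)$, hence the evaluation map $(i,f)\mapsto\Lambda_i f$ is computable into $H_i$. Type conversion then supplies a $[\delta_{H_0},\delta_{H_1},\ldots]$-name of $(\Lambda_i f)_i$ from a $\delta_H$-name of $f$.

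For the norm-sum: since $(e_{ik})_{k\in\NN}$ is an orthonormal basis of $H_i$, Parseval gives $\|\Lambda_i f\|^2=\sum_{k}|\langle \Lambda_i f,e_{ik}\rangle|^2=\sum_{k}|\langle f,\Lambda_i^{*}e_{ik}\rangle|^2$, so
\begin{align*}
\sum_{i=1}^{\infty}\|\Lambda_i f\|^2=\sum_{i,k}|\langle f,\Lambda_i^{*}e_{ik}\rangle|^2=\|U^{*}f\|_{l^2}^{2},
\end{align*}
where $U^{*}:H\to l^2$ is the analysis operator associated with the frame $\{\Lambda_i^{*}e_{ik}:i,k\in\NN\}$. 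By assumption $U^{*}$ is computable, so $U^{*}f$ is a $\delta_{l^2}$-computable element, and hence its norm and the square of its norm are $\delta_\RR$-computable uniformly in $f$. This is exactly the second component required by $\delta_{(\sum_{i\in\NN}\oplus H_i)_{l_2}}$.

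I expect the only delicate point to be ensuring that both computations are genuinely uniform in a name of $f$ (not merely pointwise), but this is automatic because each ingredient is the result of composing computable maps with the name of $f$; the identity $\sum_i\|\Lambda_i f\|^2=\|U^{*}f\|_{l^2}^{2}$ is what makes the apparently double sum over $(i,k)$ collapse into a single $\delta_\RR$-computable real, thereby avoiding the obstruction illustrated in Example 3.5 where $\|U^{*}e_0\|_{l^2}^{2}$ fails to be computable.
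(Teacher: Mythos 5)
Your proposal is correct and follows essentially the same route as the paper: the component sequence $(\Lambda_i f)_i$ is obtained from the computability of the $g$-frame via evaluation (and type conversion), and the real number $\sum_{i=1}^{\infty}\|\Lambda_i f\|^2$ is obtained from the computable analysis operator of the frame $\{\Lambda_i^{*}e_{ik}\}$ through the identity $\sum_{i}\|\Lambda_i f\|^2=\sum_{i,k}|\langle f,\Lambda_i^{*}e_{ik}\rangle|^2$, yielding a $\delta_{(\sum\oplus H_i)_{l_2}}$-name of $T^{*}(f)$. The paper only adds the final remark that this name can be converted to a $\delta_{Fourier}$-name by Theorem 3.3, which your argument implicitly covers.
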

\begin{proof}
Since $\{\Lambda_i\in L(H,H_i):i\in \NN\}$ is a computable sequence, for given computable $f\in H$ and $i\in\NN$, $\Lambda_if$ is $\delta_{H_i}$ computable by Evaluation property. Also, as the analysis operator associated to the frame $\{\Lambda_i^{*}e_{ik}:i,k\in \NN\}$ is computable, we get that $\|((\langle f,\Lambda_i^{*}e_{ik}\rangle)_k)_i\|_{l^2}$ is $\delta_{\RR}$-computable for given computable $f\in H$.\\
Since $\|((\langle f,\Lambda_i^{*}e_{ik}\rangle)_k)_i\|_{l^2}=\sum_{i=1}^{\infty}\sum_{k=1}^{\infty}|\langle f,\Lambda_i^{*}e_{ik}\rangle|^2=\sum_{i=1}^{\infty}\|\Lambda_i f\|^2$, we get $\sum_{i=1}^{\infty}\|\Lambda_i f\|^2$ is $\delta_{\RR}$-computable. Thus, we get $\delta_{(\sum\oplus H_i)_{l^2}}$ name of $(\Lambda_i f)_i$ for given computable $f\in H$ and hence $\delta_{Fourier}$ name of $(\Lambda_i f)$ by Theorem 3.3.
\end{proof}

\begin{corollary}
Let $\{\Lambda_i \in L(H,H_i):i\in \NN\}$ be a computable $g$-frame for $H$ with corresponding frame $\{\Lambda_{i}^{*}e_{ik}:i,k\in \NN\}$ being computable with computable analysis operator, then the $g$-frame operator $S_g:H \rightarrow H$ given by $S_g (f)=\sum_{i\in\NN} \Lambda_{i}^{*}\Lambda_{i} f$ is a computable isomorphism.
\end{corollary}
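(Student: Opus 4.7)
The plan is to write the $g$-frame operator as the composition $S_g = T \circ T^{*}$ of the synthesis operator $T:(\sum_{i\in\NN}\oplus H_i)_{l^2}\to H$ and the analysis operator $T^{*}:H\to (\sum_{i\in\NN}\oplus H_i)_{l^2}$ associated to the $g$-frame $\{\Lambda_i\}$, and to verify that both factors are computable under the given hypotheses. Computability of $T^{*}$ follows immediately from Theorem 3.14, whose hypotheses are exactly what the corollary assumes.

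For $T$, I would invoke Theorem 3.11, which requires the double sequence $\{\|\Lambda_i^{*}e_{ik}\|\}$ to be computable. Since the corresponding frame $\{\Lambda_i^{*}e_{ik}:i,k\in\NN\}$ is assumed computable in $H$ and the norm $\|\cdot\|:H\to\RR$ of a computable Hilbert space is a computable function, the composition yields a computable double sequence of reals. Hence $T$ is computable, and composition of computable operators gives computability of $S_g = T\circ T^{*}$.

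For the isomorphism part, I would rely on the classical $g$-frame theory: $S_g$ is bounded, self-adjoint, positive, and satisfies $A\cdot I \le S_g \le B\cdot I$ with $A,B$ the $g$-frame bounds, so $S_g$ is a bijective bounded linear operator on $H$. The computable Banach Inverse Mapping Theorem [6] then yields that $S_g^{-1}$ is computable, and therefore $S_g$ is a computable isomorphism.

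There is no real obstacle here; the corollary is a direct assembly. The only subtlety worth flagging in the write-up is the justification that computability of the frame $\{\Lambda_i^{*}e_{ik}\}$ automatically delivers computability of the double sequence of norms (so Theorem 3.11 applies), and that the two appeals to Theorems 3.11 and 3.14 use consistent indexing conventions for the basis elements $e_{ik}$ of the computable Hilbert spaces $H_i$.
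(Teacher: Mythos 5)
Your proposal is correct and follows essentially the same route as the paper: computability of $T^{*}$ from Theorem 3.14, computability of $T$ from Theorem 3.11, composition to get $S_g$, and the computable Banach Inverse Mapping Theorem for $S_g^{-1}$. The extra step you flag --- deriving the computable double sequence of norms $\{\|\Lambda_i^{*}e_{ik}\|\}$ from the computability of the frame and of the norm so that Theorem 3.11 applies --- is a detail the paper leaves implicit, and it is a worthwhile addition.
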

\begin{proof}
The analysis operator $T^{*}$ associated with the $g$-frame is computable by Theorem 3.14 and the synthesis operator $T$ associated with the $g$-frame is computable by Theorem 3.11. This gives the computability of the operator $S_g$. By computable Banach Inverse Mapping Theorem, $S_{g}^{-1}$ is computable as well.
\end{proof}

\begin{corollary}
Let $\{\Lambda_i\in L(H,H_i):i\in \NN\}$ be a computable $g$-frame for $H$ with the corresponding frame $\{\Lambda_i^{*}e_{ik}:i,k\in \NN\}$ being computable with computable analysis operator, then the pseudo-inverse operator $T^{+}:H\rightarrow (\sum\oplus H_i)_{l^2}$ given by $T^{+}(f)=(\Lambda_i S_g^{-1}f)_i$ is computable.
\end{corollary}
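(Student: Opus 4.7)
The plan is to recognize that the pseudo-inverse operator factors as a composition of two operators whose computability has already been established in the preceding theorem and corollary. Specifically, for $f \in H$, we have
\begin{align*}
T^{+}(f) = (\Lambda_i S_g^{-1} f)_i = T^{*}(S_g^{-1} f),
\end{align*}
so that $T^{+} = T^{*} \circ S_g^{-1}$ as operators from $H$ into $(\sum\oplus H_i)_{l^2}$.

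First, I would invoke Corollary 3.15 under the standing hypotheses (computable $g$-frame with corresponding computable frame $\{\Lambda_i^{*}e_{ik}\}$ having a computable analysis operator) to conclude that $S_g$ is a computable isomorphism and that $S_g^{-1}: H \to H$ is computable. Second, I would invoke Theorem 3.14 under the same hypotheses to conclude that the analysis operator $T^{*}: H \to (\sum\oplus H_i)_{l^2}$ is computable. Both hypotheses needed for these two results are precisely the hypotheses of the corollary being proved, so no extra assumptions are required.

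Finally, since the composition of two computable functions between admissibly represented spaces is again computable (this follows from the type conversion and evaluation properties of the function space representation summarized in Section~2, and is in fact precisely how composition was used already in Theorems 3.2 and 3.10), the map $T^{+} = T^{*} \circ S_g^{-1}: H \to (\sum\oplus H_i)_{l^2}$ is computable. This completes the proof.

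There is really no serious obstacle here: once Theorem 3.14 and Corollary 3.15 are in hand, the only content of the corollary is the algebraic identity $T^{+} = T^{*} \circ S_g^{-1}$, which is immediate from the defining formula $T^{+}(f) = (\Lambda_i S_g^{-1} f)_i$ and the definition of the analysis operator $T^{*}(g) = (\Lambda_i g)_i$. Thus the proof amounts to writing two sentences citing the earlier results and invoking closure of computability under composition.
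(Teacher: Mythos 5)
Your proposal is correct and matches the paper's own argument: the paper likewise writes $T^{+}(f)=T^{*}(S_g^{-1}f)$, obtains computability of $S_g^{-1}$ from Corollary 3.15 and of $T^{*}$ from Theorem 3.14, and concludes by composition. The only cosmetic difference is that the paper phrases this pointwise for a computable $f\in H$ rather than as an operator identity.
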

\begin{proof}
Let $f\in H$ be computable with respect to Cauchy representation. Then, $S_g^{-1}f$ is computable by Corollary 3.15. Since analysis operator $T^{*}$ is computable by Theorem 3.14, $T^{*}(S_g^{-1}f)=(\Lambda_i S_g^{-1}f)_i$ is a computable element of $(\sum\oplus H_i)_{l^{2}}$.
\end{proof}

We now develop the notion of dual $g$-frames in the context of computability.
\begin{definition}
Let $H$ be a computable Hilbert space, $(H_i)$ be a sequence of computable Hilbert spaces and  $\{\Lambda_i\in L(H,H_i):i\in \NN\}$ be a computable $g$-frame for $H$. A computable $g$-frame $\{\Theta_i\in L(H,H_i):i\in \NN\}$ is called a computable dual $g$-frame of $\{\Lambda_i\}$ if it satisfies
\begin{align*}
f=\sum_{i=1}^{\infty}\Lambda_{i}^{*}\Theta_if, \ \text{for all} \ f\in H.
\end{align*}
\end{definition}
If $\{\Lambda_i\in L(H,H_i):i\in \NN\}$ be a computable $g$-frame such that the corresponding frame $\{\Lambda_i^{*}e_{ik}:i,k\in \NN\}$ is computable with computable analysis operator, then $(\tilde{\Lambda_i})=(\Lambda_i S^{-1}_{g})$ is computable $g$-frame such that
\begin{align*}
f=\sum_{i=1}^{\infty}\Lambda_{i}^{*}\tilde{\Lambda_i}f=\sum_{i=1}^{\infty} {\tilde{\Lambda_i}}^{*}\Lambda_i f, \ \text{for all} \ f\in H.
\end{align*}
Thus, we get a computable canonical dual $g$-frame of $\{\Lambda_i\in L(H,H_i):i\in \NN\}$. Similar to the case of frames, computable $g$-frames need not always have computable dual $g$-frames as can be easily seen by the following example.

\begin{example}
Consider the computable $g$-frame $T:l^{2}\rightarrow l^{2}$ given by
\[\left(\begin{array}{ccccc}
 1& a_1 & a_2 & a_3 & \cdots\\
 0 & 1 & a_1 & a_2 & \cdots\\
 0 & 0 & 1 & a_1 & \cdots\\
 \vdots &\vdots &\vdots & \ddots\end{array}\right),\]
where $(a_i)$ is a computable sequence in $F^{\NN}$ such that $\|(a_i)\|_{l^2}$ exists but not computable. The corresponding frame for the above $g$-frame is given by
\begin{align*}
\{T^{*}(\delta_i):i\in \NN\}=\{(1,a_1,a_2,...), (0,1,a_1,a_2,...),(0,0,1,a_1,...),...\}.
\end{align*}
A dual frame for $\{T^{*}(\delta_i):i\in \NN\}$ is given by
\begin{align*}
\{u_i^{\prime}:i\in \NN\}&=\{(1,0,0,...),(-a_1,1,0,...),(-a_2,-a_1,1,...),...\}\\
&=\{(-a_i,-a_{i-1},...,-a_1,1,0,...);i=0,1,2,...\}.
\end{align*}
Define $\tau:l^2\rightarrow l^2$ as $\tau(f)=\sum_{i=1}^{\infty}\langle f,u_i^{\prime}\rangle e_i$.
Then, $\{\tau\}$ is a dual $g$-frame for $\{T\}$ but is not computable as $\tau(e_0)=(1,-a_1,-a_2,...)$ is not a computable element of $l^2$.
\end{example}

We now give a computable version of a sufficient condition for a $g$-frame to be a dual $g$-frame of a given $g$-frame $(\Lambda_i)$.
\begin{theorem}
Let $H$ be a computable Hilbert space, $(H_i)$ be a sequence of computable Hilbert space and $\{\Lambda_i\in L(H,H_i):i\in\NN\}$ be a computable $g$-frame for $H$ with respect to $\{H_i:i\in\NN\}$. Let $\{\Gamma_i\in L(H,H_i):i\in\NN\}$ be $g$-frame such that
\begin{align*}
\Gamma_j^{*}(e_{jk})=\phi(E_{jk}), j,k\in \NN,
\end{align*}
where $\phi:(\sum\oplus H_i)_{l_{2}}\rightarrow H$ is a computable left inverse of $T^{*}$ such that $\phi^{*}$ is $(\delta_H,
\delta_{(\sum\oplus H_i)_{l_{2}}})$ computable. Then, $(\Gamma_i)$ is a computable dual $g$-frame to $(\Lambda_i)$.
\end{theorem}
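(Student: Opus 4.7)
The plan is to identify $\phi^{*}$ with the analysis operator of the candidate dual $g$-frame $(\Gamma_{i})$ and then exploit the adjoint relation between $\phi T^{*}=I_H$ and $T\phi^{*}=I_H$ to obtain both the dual-$g$-frame identity and the computability of each $\Gamma_i$ simultaneously.

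First, I would show that $\phi^{*}f=(\Gamma_i f)_{i\in\NN}$ for every $f\in H$. For fixed $f\in H$, write $\phi^{*}f=(g_i)_{i\in\NN}\in(\sum\oplus H_i)_{l_2}$. Using the orthonormal bases $(e_{jk})_{k\in\NN}$ of $H_j$ and the hypothesis $\Gamma_j^{*}e_{jk}=\phi(E_{jk})$, one computes
\begin{align*}
\langle e_{jk},g_j\rangle_{H_j}=\langle E_{jk},\phi^{*}f\rangle_{(\sum\oplus H_i)_{l_2}}=\langle \phi(E_{jk}),f\rangle_H=\langle \Gamma_j^{*}e_{jk},f\rangle_H=\langle e_{jk},\Gamma_j f\rangle_{H_j}.
\end{align*}
Since $(e_{jk})_{k\in\NN}$ is an orthonormal basis of $H_j$, this forces $g_j=\Gamma_j f$ for every $j\in\NN$, so $\phi^{*}f=(\Gamma_j f)_{j\in\NN}$ as claimed.

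Next, the dual $g$-frame identity follows immediately. Since $\phi T^{*}=I_H$, taking adjoints gives $T\phi^{*}=I_H$, where $T:(\sum\oplus H_i)_{l_2}\to H$ is the synthesis operator associated to $(\Lambda_i)$, namely $T((h_i))=\sum_{i=1}^{\infty}\Lambda_i^{*}h_i$. Applying this to an arbitrary $f\in H$ and substituting the expression obtained in the previous paragraph yields
\begin{align*}
f=T(\phi^{*}f)=T\bigl((\Gamma_i f)_i\bigr)=\sum_{i=1}^{\infty}\Lambda_i^{*}\Gamma_i f,
\end{align*}
which is exactly the defining relation for $(\Gamma_i)$ to be a dual $g$-frame of $(\Lambda_i)$.

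Finally, I would establish the computability of the sequence $(\Gamma_i)$. The hypothesis gives that $\phi^{*}:H\to(\sum\oplus H_i)_{l_2}$ is $(\delta_H,\delta_{(\sum\oplus H_i)_{l_2}})$-computable; by the structure of $\delta_{(\sum\oplus H_i)_{l_2}}$, from its output one can computably extract the $\delta_{H_i}$-name of the $i$-th coordinate for any given $i\in\NN$. Combined with the identification $\phi^{*}f=(\Gamma_i f)_i$, this shows that the two-argument map $(i,f)\mapsto \Gamma_i f$ from $\NN\times H$ to $\bigcup_i H_i$ is computable. Applying type conversion then yields that $i\mapsto \Gamma_i$ is a computable function $\NN\to L^{*}(H)$ with respect to the representation $\delta^{*}$ introduced after Definition 3.1, so $(\Gamma_i)$ is a computable $g$-frame in the sense of the paper, completing the proof. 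The main obstacle is the first paragraph, where one must verify that the hypothesis $\Gamma_j^{*}e_{jk}=\phi(E_{jk})$ on basis elements is strong enough to pin down $\phi^{*}$ as the analysis operator of $(\Gamma_i)$ on all of $H$; once this identification is made, everything else is a direct transcription of the classical dual-frame argument into the computable setting.
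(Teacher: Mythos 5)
Your proposal is correct, and it reaches the conclusion by a more self-contained route than the paper. The shared kernel is the identity $\langle e_{jk},(\phi^{*}f)_j\rangle=\langle \phi(E_{jk}),f\rangle=\langle e_{jk},\Gamma_j f\rangle$, which both arguments extract from the hypothesis $\Gamma_j^{*}e_{jk}=\phi(E_{jk})$; but you push it one step further to the clean identification $\phi^{*}f=(\Gamma_i f)_i$, and then everything follows directly: the dual relation comes from taking adjoints in $\phi T^{*}=I_H$ to get $T\phi^{*}=I_H$ (the paper instead just cites Theorem 3.3 of [2] for this), and the computability of $(\Gamma_i)$ comes from extracting the $i$-th coordinate of the $\delta_{(\sum\oplus H_i)_{l_2}}$-name of $\phi^{*}f$ followed by type conversion. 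The paper instead routes the computability claim through its Theorem 3.7: it uses the computability of $\phi$ to show that the corresponding frame $\{\Gamma_j^{*}e_{jk}\}=\{\phi(E_{jk})\}$ is a computable frame, shows that $(\langle f,\Gamma_j^{*}e_{jk}\rangle)_k=(\langle(\phi^{*}f)_j,e_{jk}\rangle)_k$ is a computable element of $l^2$, and then invokes that theorem. Your version buys directness and actually uses fewer hypotheses (the computability of $\phi$ itself plays no role once $\phi^{*}$ is assumed computable, whereas the paper needs it to feed Theorem 3.7); the paper's version buys consistency with its general strategy of reducing statements about computable $g$-frames to statements about their corresponding frames $\{\Lambda_i^{*}e_{ik}\}$. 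One small point worth making explicit if you write this up: the coordinate extraction step is legitimate precisely because of how $\delta_{(\sum\oplus H_i)_{l_2}}$ is defined (the name is a tupling of $\delta_{H_i}$-names), which is the representation the codomain of $\phi^{*}$ carries by hypothesis.
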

\begin{proof}
$(\Gamma_i)$ is a dual $g$-frame to $(\Lambda_{i})$ by Theorem 3.3 [2]. Since $(\Gamma_i)$ is a $g$-frame for $H$ and $\phi$ is a computable map, $\{\Gamma_j^{*}(e_{jk}):j,k\in \NN\}$ is a computable frame in $H$. As $\phi^{*}$ is $(\delta_H, \delta_{(\sum\oplus H_i)_{l_{2}}})$ computable, $(\phi^{*}(f))_j$ is $\delta_{H_j}$ computable for given  computable $f\in H$ and $j\in\NN$, where $(\phi^{*}(f))_j$ denotes the  $j^{th}$ component of $\phi^{*}(f)$. This gives the $\delta_{l^2}$ computability of $(\langle (\phi^{*}(f))_j,e_{jk}\rangle)_k$ for given computable $f\in H$ and $j\in\NN$. Since
\begin{align*} \langle f,\Gamma_j^{*}e_{jk}\rangle=\langle f,\phi(E_{jk})\rangle=\langle \phi^{*}(f),E_{jk}\rangle=\langle (\phi^{*}(f))_j,e_{jk}\rangle \end{align*}
$(\langle f,\Gamma_j^{*}e_{jk}\rangle)_k$ is a computable element of $l^2$ for given computable $f\in H$ and $j\in\NN$. Now, the result follows from Theorem 3.7.
\end{proof}

For computable $g$-frames with computable frame operator, the existence of computable dual $g$-frames can be characterized by the following result. The result is a computable version of Theorem 3.4[2].
\begin{theorem}
Let $(\Lambda_i)$ be a computable $g$-frame for $H$ with respect to $\{H_i\}$ with computable frame operator $S$. Then, a $g$-frame $(\Gamma_i)$ with computable analysis operator is a computable dual $g$-frame of $(\Lambda_i)$  if and only if there exists a $(\delta_H, \delta_{(\sum\oplus H_i)_{l_2}})$ computable operator $\psi$ such that $T\psi=0, T$ being the synthesis operator of $g$-frame $(\Lambda_i)$.
\end{theorem}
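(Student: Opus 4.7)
The plan is to reduce the statement to the classical parametrization of dual $g$-frames: every dual $g$-frame $(\Gamma_i)$ of $(\Lambda_i)$ has the form $\Gamma^{*}=\tilde{T}^{*}+\psi$, where $\tilde{T}^{*}$ is the analysis operator of the canonical dual $(\tilde{\Lambda}_{i})=(\Lambda_{i}S^{-1})$ and $\psi:H\to(\sum\oplus H_i)_{l_{2}}$ satisfies $T\psi=0$. The theorem then amounts to saying that the correction term $\psi=\Gamma^{*}-\tilde{T}^{*}$ is $(\delta_H,\delta_{(\sum\oplus H_i)_{l_{2}}})$-computable precisely when $(\Gamma_i)$ is a computable dual $g$-frame.

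The first step, common to both directions, is to establish that $\tilde{T}^{*}$ itself is $(\delta_H,\delta_{(\sum\oplus H_i)_{l_{2}}})$-computable. Since $S$ is computable, the computable Banach Inverse Mapping Theorem (as invoked in Corollary 3.15) gives a computable $S^{-1}$, and then $\tilde{\Lambda}_{i}f=\Lambda_{i}S^{-1}f$ is $\delta_{H_i}$-computable uniformly in $i$ by the evaluation property applied to the computable sequence $(\Lambda_i)$. To promote these componentwise data to a $\delta_{(\sum\oplus H_i)_{l_{2}}}$-name, I must also produce the squared-norm sum as a $\delta_{\RR}$-computable real; here the key identity is
\begin{align*}
\sum_{i=1}^{\infty}\|\tilde{\Lambda}_{i}f\|^{2}=\langle S^{-1}f,f\rangle,
\end{align*}
whose right-hand side is computable from $S^{-1}$ and the computable inner product.

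For the forward direction, assume $(\Gamma_i)$ is a computable dual, so that $\Gamma^{*}$ is computable by hypothesis. Setting $\psi:=\Gamma^{*}-\tilde{T}^{*}$, computability is immediate because vector-space operations on the computable Hilbert space $(\sum\oplus H_i)_{l_{2}}$ are computable, and the dual-frame identities $T\Gamma^{*}=I=T\tilde{T}^{*}$ give $T\psi=0$. For the backward direction, given computable $\psi$ with $T\psi=0$, write $\Gamma^{*}=\tilde{T}^{*}+\psi$; then $T\Gamma^{*}=I$, so the expansion $f=\sum\Lambda_{i}^{*}\Gamma_i f$ holds, i.e., $(\Gamma_i)$ is a dual. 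Computability of $\Gamma^{*}$, together with the fact that the $i$-th component projection on $(\sum\oplus H_i)_{l_{2}}$ is $(\delta_{(\sum\oplus H_i)_{l_{2}}},\delta_{H_i})$-computable uniformly in $i$, yields computability of the sequence $(\Gamma_i)$ in the sense of Definition 3.1, so $(\Gamma_i)$ is a computable dual $g$-frame.

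The main obstacle is the $\delta_{(\sum\oplus H_i)_{l_{2}}}$-computability of $\tilde{T}^{*}$: componentwise computable information alone is insufficient under this representation (as is made explicit by Theorem 3.4), and obtaining the norm component forces us to invoke the Parseval-type identity above rather than attempt a term-by-term summation, which would in general be non-effective.
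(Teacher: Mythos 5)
Your proposal is correct and follows essentially the same route as the paper: the same decomposition $(\psi f)_i=\Gamma_i f-\Lambda_i S^{-1}f$ (equivalently $\psi=\Gamma^{*}-\tilde{T}^{*}$), the same key identity $\sum_{i}\|\Lambda_i S^{-1}f\|^{2}=\langle S^{-1}f,f\rangle$ to obtain the norm component of the $\delta_{(\sum\oplus H_i)_{l_2}}$-name, and the same converse construction $\Gamma_i f=\Lambda_i S^{-1}f+(\psi f)_i$. The only cosmetic difference is that you obtain computability of $\psi f$ from computability of subtraction in $\bigl(\sum\oplus H_i\bigr)_{l_2}$, whereas the paper expands $\sum_{i}\|\Gamma_i f-\Lambda_i S^{-1}f\|^{2}$ directly.
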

\begin{proof}
Let $(\Lambda_i)$ be a computable $g$-frame with computable frame operator $S$ and $(\Gamma_i)$ be a computable dual $g$-frame of $(\Lambda_i)$ with computable analysis operator.\\
Define $\psi:H\rightarrow (\sum\oplus H_i)_{l^2}$ as\\
\[f\rightarrow\left(\begin{array}{ccccc}
 \Gamma_1f-\lambda_1S^{-1}f\\
 \Gamma_2f-\lambda_2S^{-1}f\\
 \vdots
 \end{array}\right),\]
 Then, $\psi$ is a  well defined bounded operator such that for given computable $f\in H$ and $i\in \NN, \Gamma_if-\Lambda_i S^{-1}f$ is $\delta_{H_i}$ computable by evaluation property.
 Since $(\Gamma_i)$ has computable analysis operator, $\sum_{i=1}^{\infty}\|\Gamma_i f\|^2$ is $\delta_{\RR}$-computable for given computable $f\in H$. The computability of operator $S^{-1}$ and inner product $\langle \cdot \rangle$ implies the computability of $\sum_{i=1}^{\infty}\|\Lambda_i S^{-1}f\|^2=\langle S^{-1}(SS^{-1}f),f\rangle=\langle S^{-1}f,f\rangle$ for given computable $f\in H$. Therefore, for given $f\in H$,
 \begin{align*}
 \sum_{i=1}^{\infty}\|\Gamma_if-\Lambda_iS^{-1}f\|^2=\sum_{i=1}^{\infty}\|\Gamma_i f\|^2+\sum_{i=1}^{\infty}\|\Lambda_iS^{-1}f\|^2+2\bigg(\sum_{i=1}^{\infty}\|\Gamma_i f\|^2\bigg)^{\frac{1}{2}}\bigg(\sum_{i=1}^{\infty}\|\Lambda_iS^{-1}f\|^2\bigg)^{\frac{1}{2}}
 \end{align*}
 is $\delta_{\RR}$-computable. Hence $\psi$ is $(\delta_H, \delta_{(\sum\oplus H_i)_{l_2}})$ computable operator such that $T\psi=0$.\\
 Conversely, let $\psi$ be a $(\delta_H, \delta_{(\sum\oplus H_i)_{l_2}})$ computable operator such that $T\psi=0.$\\
 Define $\Gamma_i:H\rightarrow H_i$ as $\Gamma_i(f)=\Lambda_iS^{-1}f+(\psi f)_i,$ $f\in H$, $i\in\NN$. Then, $(\Gamma_i)$ forms a computable dual $g$-frame to $(\Lambda_i)$. The associated analysis operator $T^{*}:H \rightarrow (\sum\oplus H_i)_{l_2}$ given by $f \rightarrow (\Gamma_i f)_i=(\Lambda_iS^{-1}f+(\psi f)_i)$ is computable as well as for given computable $f\in H$ and $i\in \NN$, $\Lambda_i S^{-1}f+(\psi f)_i$ is $\delta_{H_i}$ computable and
 \begin{align*}
 \sum_{i=1}^{\infty}\|\Gamma_if\|^2=\sum_{i=1}^{\infty}\|\Lambda_iS^{-1}f\|^2+\sum_{i=1}^{\infty}\|(\psi_i f)_i\|^2+2\bigg(\sum_{i=1}^{\infty}\|\Gamma_i f\|^2\bigg)^{\frac{1}{2}}\bigg(\sum_{i=1}^{\infty}\|(\psi f)_i\|^2\bigg)^{\frac{1}{2}}
  \end{align*}
  is $\delta_{\RR}$-computable because of the $\delta_{\RR}$-computability of $\langle S^{-1} f, f\rangle$ and $\|\psi f\|$ for given computable $f\in H$.
  \end{proof}
  
\mbox{}

\begin{thebibliography}{99}\small\itemsep1pt
  \bibitem{1} M. R. Abdollahpour and A. Najati, $g$-frames and Hilbert Schmidt operators, Bull. Iran. Math. Soc., 37(4)(2011), 141-155.

  \bibitem{2} A. A. Arefijamaal and S. Ghasemi, On Characterization and stability of alternate dual of $g$-frames, Turkish Journal Of Mathematics, 37(1) (2013), 71-79.

 \bibitem{3} S. Banach and S. Mazur, Sur les fonctions calculables, Ann. Soc. Pol. de Math. 16(1937), 223.

\bibitem{4} V. Brattka, Computability over topological structures. In S. Barry Cooper and Sergey S. Goncharov, editors, Computability and Models, 93-136. Kluwer Academic Publishers, New York, 2003.

\bibitem{5} V. Brattka and A. Yoshikawa, Towards computability of elliptic boundary value problems in variational formulation, J. Complexity 22(6)(2006), 858-880.

\bibitem{6} V. Brattka, A computable version of Banach's Inverse Mapping Theorem, Annals of Pure and Applied Logic, 157(2009), 85-96.

\bibitem{7} V. Brattka and R. Dillhage, Computability of compact operators on computable Banach spaces with bases. Math. Log. Quart.53, No.4/5(2007), 345-364.

\bibitem{8} O. Christensen, An introduction to Frames and Riesz Bases, Birkhauser, Boston, 2003.

\bibitem{9} A. Grzegorczyk, On the definitions of computable real continuous functions, Fund. Math. 44(1957), 61-71.

\bibitem{10} S. K. Kaushik and Poonam Mantry, Computable frames in computable Hilbert Spaces, Submitted.

\bibitem{11} S. K. Kaushik and Poonam Mantry, Computable frames in computable Banach spaces, Int. Journal of Analysis and Applications, 11(2)(2016), 93-100.

\bibitem{12} C. Kreitz and K. Weihrauch, A unified approach to constructive and recursive analysis, in: M. Richter, E. Borger, W. Oberschelp, B.Schinzel, W. Thomas (Eds.), Computation and Proof Theory, Lecture Notes in Mathematics, Vol. 1104, Springer, Berlin, 1984, 259-278.

\bibitem{13} D. Lacombe, Les ensembles recursivement ouverts ou fermes, et leurs applications a 1'Analyse recursive, Comptes, Rendus 246(1958), 28-31.

\bibitem{14} D. Lacombe, Quelques procedes de definition en topologie recursive, in: A. Heyting, editor, Constructivity in mathematics, (North- Holland, Amsterdam 1959), 129-158.

\bibitem{15} M.B. Pour-El and J.I. Richards, Computability in Analysis and Physics, Springer, Berlin, 1989.

\bibitem{16} W. Sun, $g$-frames and $g$- Riesz bases, J. Math. Anal. Appl. 322(1)(2006), 437-452.

\bibitem{17} A. M. Turing, On computable numbers, with an application to the "Entscheidungsproblem", Proc. London Math. Soc. 42(1936), 230-265.

\bibitem{18} K. Weihrauch, Computable Analysis, Springer, Berlin, 2000.

\end{thebibliography}
\end{document}